\documentclass[reqno,11pt]{amsart}
\usepackage{amsmath,amssymb,amsthm, verbatim}
\usepackage{url}
\usepackage[usenames, dvipsnames]{color}
\usepackage[foot]{amsaddr}

\usepackage[letterpaper,hmargin=1in,vmargin=1in]{geometry}

\usepackage{graphicx}
\usepackage{enumerate,pinlabel}
\usepackage{mathrsfs,graphicx,url}
\usepackage[usenames,dvipsnames]{xcolor}
\usepackage[colorlinks=true,linkcolor=Black,citecolor=Black, urlcolor=Black]{hyperref}
\numberwithin{equation}{section}

\theoremstyle{plain}
\newtheorem{theorem}{Theorem}[section]

\newtheorem*{theorem*}{Theorem}
\newtheorem*{lemma*}{Lemma}
\newtheorem{lemma}[theorem]{Lemma}
\newtheorem{proposition}[theorem]{Proposition}

\theoremstyle{definition}

\theoremstyle{remark}
\newtheorem{remark}[theorem]{Remark}


\newcommand{\e}{\epsilon}
\newcommand{\ep}{\varepsilon}

\newcommand{\R}{\mathbb{R}}
\newcommand{\US}{\mathbb{S}}

\newcommand\supp{\mathop{\rm supp}}

\newcommand\real{\mathop{\rm Re}}
\newcommand\imag{\mathop{\rm Im}}

\newcommand*{\defeq}{\mathrel{\vcenter{\baselineskip0.5ex \lineskiplimit0pt

                     \hbox{\scriptsize.}\hbox{\scriptsize.}}}%
                     =}

\begin{document}
\title[Resolvent bounds for short range $L^\infty$ potentials]{Semiclassical resolvent bounds for short range $L^\infty$ potentials with singularities at the origin}

\author{Jacob Shapiro}
\address{Department of Mathematics, University of Dayton, Dayton, OH 45469-2316, USA}
\email{jshapiro1@udayton.edu}

\keywords{resolvent estimate, Schr\"odinger operator, short range potential}
\begin{abstract}
We consider, for $h, E > 0$, resolvent estimates for the semiclassical Schr\"odinger operator $-h^2 \Delta + V - E$. Near infinity, the potential takes the form $V = V_L+ V_S$, where $V_L$ is a long range potential which is Lipschitz with respect to the radial variable, while $V_S = O(|x|^{-1} (\log |x|)^{-\rho})$ for some $\rho > 1$. Near the origin, $|V|$ may behave like $|x|^{-\beta}$, provided $0 \le \beta < 2(\sqrt{3} -1)$. We find that, for any $\tilde{\rho} > 1$, there are $C, \, h_0 >0$ such that we have a resolvent bound of the form  $\exp(Ch^{-2} (\log(h^{-1}))^{1 + \tilde{\rho}})$ for all $h \in (0, h_0]$. The $h$-dependence of the bound improves if $V_S$ decays at a faster rate toward infinity.
\end{abstract}

\maketitle

\section{Introduction and statement of results}
Let $\Delta \defeq \sum_{j =1}^n \partial^2_j \le 0$ be the Laplacian on $\mathbb{R}^n$, $n \ge 2$. In this article, we study the semiclassical Schr\"odinger operator with real valued potential,
\begin{equation} \label{P}
 P = P(h) \defeq -h^2 \Delta + V(x) : L^2(\mathbb{R}^n) \to L^2(\mathbb{R}^n),\qquad h \in (0,1), \, x \in \R^n.
\end{equation}
We use $(r, \theta) = (|x|, x/|x|) \in (0, \infty) \times \US^{n-1}$ to denote polar coordinates on $\R^n \setminus \{0\}$. For a function $f$ defined on some subset of $\R^n$, we use the notation $f(r, \theta) \defeq f(r \theta)$ and denote the derivative with respect to the radial variable by $f' \defeq \partial_r f.$

We first describe the conditions we impose on the potential $V$. Let $\chi \in C^\infty([0, \infty); [0,1])$ be such that $\chi = 1$ near $[0,1]$ while $\chi = 0$ near $[2, \infty)$. We suppose that 
\begin{equation} \label{V0}
V_0 \defeq \chi V \in L^p(\R^n), \qquad  \text{for some } p \ge 2, \, p > n /2,
\end{equation} 
has the bound
\begin{equation} 
|V_0(r, \theta)|\le c_0 r^{-\beta}, \label{V0 bd}
\end{equation}
for some $c_0 > 0$ and some 
\begin{equation} \label{beta}
 0 \le \beta < 2 (\sqrt{3} - 1) \approx 1.464.
\end{equation}

On the other hand, we suppose $(1- \chi)V$ may be decomposed  as a
sum of long- and short-range terms:
\begin{equation} \label{L plus S}
(1- \chi)V = V_L + V_S , \qquad V_L, \, V_S \in L^\infty(\R^n).
\end{equation}
The long-range term $V_L$ must satisfy, for some $c_L  > 0$ and some
\begin{equation} \label{y}
y: [1, \infty) \to [0,1], \qquad \lim_{r \to \infty} y(r) = 0, 
\end{equation}
that
\begin{equation}
V_L(r, \theta) \mathbf{1}_{r \ge 1} \le c_L y(r), \label{VL r big}
\end{equation}
where $\mathbf{1}_{r \ge 1}$ denotes the characteristic function of $\{x \in \R^n : |x| = r \ge 1 \}$. We also require that there is a function $V'_L \in L^1_{\text{loc}}(\R^n \setminus \{0\})$ such that, for each $\theta \in \US^{n-1}$, the function $(0, \infty) \ni r  \mapsto V_L(r, \theta)$ has distributional derivative equal to $r \mapsto V_L'( r ,\theta)$, and
\begin{equation}
V_L'(r, \theta) \mathbf{1}_{r \ge 1} \le c_L r^{-1} m_L(r). \label{VL prime r big}
\end{equation}
where $m_L(r) : [1, \infty) \to (0, 1]$ has the properties
\begin{equation} \label{mL}
\lim_{r \to \infty} m_L(r) = 0, \qquad  r^{-1} m_L(r)  \in L^1[1, \infty). 
\end{equation} 
A typical example of the function $m_L$ is $m_L(r) = (\log r + 1)^{-\rho}$ for some $\rho > 1.$

As for the short-range term $V_S$, we require
\begin{equation} 
|V_S(r, \theta) | \mathbf{1}_{r \ge 1} \leq  c_S m_S(r) r^{-1- \delta}, \label{VS bound}
\end{equation}
for some $c_S > 0$ and $0 \le \delta \le 1$. Depending on the value of $\delta$, $m_S : [1, \infty) \to [0, 1]$ should satisfy
\begin{equation} \label{delta}
\begin{aligned}
&r^{-1} m^2_S(r) \in L^1[1,\infty) && \delta = 1,  \\
&m_S(r) = 1  && 0 < \delta < 1,\\
&m_S(r) = (\log r + 1)^{-\rho} \text{ for some $\rho > 1$}  && \delta = 0. 
\end{aligned}
\end{equation}

The properties \eqref{V0} and \eqref{L plus S} imply $V \in L^p(\R^n ; \R) + L^\infty(\R^n ; \R)$ for some $p \ge 2, \, p > n /2$. Therefore, by \cite[Theorem 8]{ne64}, $P$ is self-adjoint $L^2(\R^n) \to L^2(\R^n)$ when equipped with domain the Sobolev space $H^2(\mathbb{R}^n)$. Thus the resolvent $(P - z)^{-1}$ is bounded $L^2(\R^n) \to L^2(\R^n)$ for all $z \in \mathbb{C} \setminus \mathbb{R}$. Our main result is the following limiting absorption resolvent estimate.
\begin{theorem} \label{Linfty thm}
Let $n \ge 2$. Fix $s > 1/2$ and $[E_{\min} , E_{\max}] \subseteq (0, \infty)$. Suppose $V$ satisfies properties \eqref{V0} through \eqref{delta}. Define
\begin{equation} \label{defn g}
    g^\pm_{s}(h, \varepsilon) \defeq \|\langle x \rangle^{-s} (P(h) - E \pm i\varepsilon)^{-1}  \langle x \rangle^{-s}\|_{L^2(\R^n) \to L^2(\R^n)}, \qquad  \ep, \, h > 0,
\end{equation}
where $\langle x \rangle = \langle r \rangle \defeq (1 + r^2)^{1/2}$.

If $\delta = 1$, there exist $C > 0$ and $h_\delta \in (0,1)$ independent of $\ep$ and $h$ so that  
\begin{equation} \label{Linfty resolv est delta 1}
g^{\pm}_s(h,\ep)\leq  \exp \big( C h^{-\frac{4}{3}} \log(h^{-1})\big), \qquad E \in [E_{\min} , E_{\max}],\, h \in (0,h_\delta], \, \ep > 0.
\end{equation}
If $0< \delta < 1$, then for any $\e > 0$,  there exist $C > 0$ and $h_\delta \in (0,1)$ independent of $\ep$ and $h$ so that
\begin{equation} \label{Linfty resolv est delta between 0 1}
g^{\pm}_s(h,\ep)\leq  \exp \big( C h^{- \frac{2\delta + 2}{2\delta + 1} - \epsilon}) , \qquad E \in [E_{\min} , E_{\max}], \, h \in (0,h_\delta], \, \ep > 0.
\end{equation}
Finally, if $\delta = 0$, then for any $\tilde{\rho} \in (1, \rho]$, there exist $C  >0$ and $h_\delta \in (0,1)$ independent of $\ep$ and $h$ so that
\begin{equation} \label{Linfty resolv est delta 0}
g^{\pm}_s(h,\ep)\leq  \exp \big( C h^{-2} (\log(h^{-1}))^{1 + \tilde{\rho}} \big), \qquad E \in [E_{\min} , E_{\max}], \, h \in (0,h_\delta], \, \ep > 0.
\end{equation}
\end{theorem}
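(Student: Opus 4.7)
The plan is to reduce the theorem to a weighted $L^2$ a priori estimate of the form
\[
\|\langle x \rangle^{-s} u\|^2_{L^2} \leq M(h)^2 \|\langle x \rangle^s f\|^2_{L^2}, \qquad (P - E \mp i\varepsilon)u = f,
\]
where $M(h)$ is the exponential on the right-hand side of the relevant bound in \eqref{Linfty resolv est delta 1}--\eqref{Linfty resolv est delta 0}. The argument naturally splits into an interior step in $\{|x|\leq 2\}$, where the singularity of $V_0$ is the obstruction, and an exterior spherical-energy step in $\{|x|\geq 1\}$, where the tails of $V_L$ and $V_S$ dictate the $h$-dependence. The two steps are glued by a cutoff overlap on $\{|x|\simeq 1\}$.

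For the interior step, the only low-regularity input is the $L^p$ bound \eqref{V0} together with the pointwise estimate \eqref{V0 bd}. I would combine these with a Hardy / Hardy--Rellich interpolation controlling $\|r^{-\beta}v\|_{L^2}$ by a product of powers of $\|v\|_{L^2}$ and the semiclassical $H^2$ norm of $v$, and then feed the result into the semiclassical energy identity
\[
h^2\|\nabla v\|^2_{L^2} = \real\langle (P-E\mp i\varepsilon)v, v\rangle + E\|v\|^2_{L^2} - \langle Vv, v\rangle.
\]
A Young inequality applied to the resulting expression produces a quadratic constraint on $\beta$ whose solvability (with positive room left to absorb $\|V_0 v\|$) is equivalent to $\beta^2 + 4\beta < 8$, i.e. $\beta < 2(\sqrt{3}-1)$. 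Under this constraint one gets an interior bound of the shape
\[
\|\chi_0 u\|_{L^2} \leq C h^{-N_0}\bigl(\|\langle x\rangle^s f\|_{L^2} + \|\tilde\chi_0 u\|_{L^2}\bigr),
\]
where $\chi_0$ is supported near $\{|x|\leq 1\}$ and $\tilde\chi_0$ localizes to $\{|x|\simeq 1\}$, producing the interface term handed to the exterior step.

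For the exterior step, I would adapt the radial spherical-energy method of Cardoso--Vodev, Datchev--Jin, and prior work of the author. Viewing $u(r,\cdot) \in L^2(\US^{n-1})$ and forming
\[
F(r) = A(r)\bigl(\|hu'(r,\cdot)\|^2 + (E - V_L(r,\cdot))\|u(r,\cdot)\|^2\bigr) + B(r)\, h^2 \real\langle u', u\rangle + (\text{angular correction}),
\]
with radial weights $A(r), B(r)$ chosen according to \eqref{VL prime r big}--\eqref{mL}, one derives a differential inequality $F'(r) \geq -\Phi(r,h) F(r) + (\text{source from } f, V_S)$. Integrating on $[1,R]$ with $R \sim h^{-N}$ and patching to a standard exterior resolvent bound beyond $R$ (e.g. of Burq--Mourre type) gives $M(h) \asymp \exp(2\int_1^R \Phi\, dr)$, with $V_L$ contributing only $h$-uniformly integrable quantities to $\Phi$ thanks to \eqref{VL r big}--\eqref{mL}. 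The $h$-dependence of $M(h)$ is therefore driven entirely by $V_S$: a Cauchy--Schwarz / Young optimization in $r$ against \eqref{VS bound}--\eqref{delta} converts the three decay regimes of $m_S$ into the three stated rates, with $\delta=1$ yielding $h^{-4/3}\log(h^{-1})$ (via the $L^1$ hypothesis on $r^{-1}m_S^2$), $0 < \delta < 1$ yielding $h^{-(2\delta+2)/(2\delta+1)-\varepsilon}$ (via a Young optimization over the truncation scale $R$), and $\delta = 0$ yielding $h^{-2}(\log h^{-1})^{1+\tilde\rho}$ from the logarithmic decay of $m_S$.

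The main obstacle is the interface at $r \simeq 1$: the boundary datum $F(1)$ produced by the exterior argument must be absorbed into the interior estimate, while conversely $\|\tilde\chi_0 u\|_{L^2}$ must be controlled by something the exterior step supplies, all while preserving the sharp constraint $\beta < 2(\sqrt{3}-1)$. Executing this interface without classical elliptic regularity for $P$ near the origin is the delicate point, and I expect it to dictate the specific choice of the cutoffs $\chi_0, \tilde\chi_0$ and of the weights $A, B$ at $r=1$.
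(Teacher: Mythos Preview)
Your overall architecture---a near-origin estimate, an exterior spherical-energy estimate, and a gluing at $r\simeq 1$---matches the paper's. But two of the three building blocks, as you describe them, would not produce the stated theorem.

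\textbf{Interior step.} Your plan to control $\|r^{-\beta}v\|_{L^2}$ by Hardy/Hardy--Rellich interpolation breaks in dimension $n=2$, where the Hardy inequality fails, yet the theorem explicitly includes $n=2$. The paper handles the region $0<r<1/2$ not by Hardy but by a Mellin-transform estimate (due to Obovu) that works uniformly for $n\ge 2$. Moreover, the constraint $\beta<2(\sqrt{3}-1)$ does \emph{not} originate from an interior elliptic/Hardy argument. It comes from the exterior Carleman construction itself: on $0<r<1$ the paper takes $w(r)=r^2$ and $\varphi'(r)=\tau h^{-1/3}r^{-\beta/2}$, and the positivity condition on the commutator term reduces to $1-\tfrac{\beta}{2}-(1+\gamma)\tfrac{\beta^2}{8}>0$, i.e.\ $8-4\beta-\beta^2>0$. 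So the quadratic constraint is a feature of the weighted energy method near $r=0$, not of any Hardy step.

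\textbf{Exterior step.} Your functional $F(r)$ omits the essential ingredient: a Carleman conjugation by $e^{\varphi/h}$ with a carefully designed radial phase $\varphi$. For merely $L^\infty$ short-range $V_S$, the cross term $2w\real\langle V_S u,u'\rangle$ arising in $(wF)'$ cannot be absorbed by Cauchy--Schwarz against $w'$ alone; the denominator one needs is $w'+4h^{-1}\varphi'w$, and it is the $h^{-1}\varphi'$ contribution that tames the factor $h^{-2}|V_S|^2$. A Gronwall inequality $F'\ge -\Phi F$ with $\Phi\sim h^{-1}|V_S|$ would at best give $\exp(Ch^{-1})$, not the finer rates in the theorem. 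In the paper the exponents come from $\max_r\varphi/h$: one fixes $\sigma=1/3$, sets $\varphi'\sim \tau h^{-\sigma}r^{-k}$ on $1<r<a=h^{-M}$ with $k$ and $M$ tuned to $\delta$, and then proves $\max\varphi_0\lesssim \log(h^{-1})$, $h^{-\frac{2(1-\delta)+o(1)}{3(1+2\delta)}}(\log h^{-1})^{1+\e}$, and $h^{-2/3}(\log h^{-1})^{1+\tilde\rho}$ in the three cases. Dividing by $h$ and adding $h^{-\sigma}=h^{-1/3}$ yields $h^{-4/3}\log h^{-1}$, $h^{-\frac{2\delta+2}{2\delta+1}-\e}$, and $h^{-2}(\log h^{-1})^{1+\tilde\rho}$ respectively. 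None of this is visible from a truncation-scale optimization in $R$ alone; the phase $\varphi$ and its precise profile on $[1,a]$ and $[a,\infty)$ are doing the work.
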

\begin{remark} The proof of Theorem \ref{Linfty thm} in fact establishes a more complicated but slightly improved version of \eqref{Linfty resolv est delta between 0 1}. For any $\e > 0$, there exist $C >0$ and $h_\delta \in (0,1)$ independent of $\ep$ and $h$ so that
\begin{equation*}
g^{\pm}_s(h,\ep)\leq  \exp \big( C h^{- \frac{4}{3} - \frac{2(1 - \delta) + \lambda^{-1}}{3(1 + 2\delta - \lambda^{-1})}} (\log(h^{-1}))^{1 + \e} \big), \qquad E \in [E_{\min} , E_{\max}], \, h \in (0,h_\delta], \, \ep > 0,
\end{equation*}
where $\lambda = \log (\log(h^{-1}))$. 
\end{remark}

\begin{remark}
The condition $\tilde{\rho} \in (1, \rho]$ is needed for technical reasons in the proof of \eqref{Linfty resolv est delta 0}. However, it is clear that once  \eqref{Linfty resolv est delta 0} holds for some $\tilde{\rho} \in (1, \rho]$, it holds for all $\tilde{\rho} > \rho$ too (with the same constants $C$ and $h_\delta$).
\end{remark}

Theorem \ref{Linfty thm} improves upon recent work on resolvent estimates in low regularity in several ways. When $\delta = 1$, the bound \eqref{Linfty resolv est delta 1} was previously proved in \cite{gash22b} if $n \ge 3$, $V_S = O((r + 1)^{-1}m_S(r))$, and $V_L = 0$. Second, when $0 < \delta < 1$, it was established in \cite{vo20b} that, if $n \ge 3$, $V_S = O((r + 1)^{-1-\delta})$, and $V_L'(r, \theta) = O((r + 1)^{-1-\gamma})$ for some $\gamma > 0$, then
\begin{equation*}
g^{\pm}_s(h,\ep)\le \exp \big( Ch^{ -\frac{2}{3} - \max \big\{ \frac{7}{3\delta }, \frac{4}{3 \gamma } \big\}} (\log(h^{-1}))^{\max \big\{ \frac{1}{\delta}, \frac{1}{\gamma} \big\}} \big).
\end{equation*}
Thus, the novelties of Theorem \ref{Linfty thm} are that it gives resolvent bounds for more types of decay conditions on $V_L$ and $V_S$, improves those bounds in several cases, allows $V$ to be singular as $r \to 0$, and includes the dimension two case. 

Furthermore, Theorem \ref{Linfty thm} warrants comparison with the resolvent bounds obtained in \cite{vo21, vo22} for short-range, \textit{radially symmetric} $L^\infty$ potentials $V$: 
\begin{equation*}
g^{\pm}_s(h,\ep) \le \begin{cases} 
\exp \big(C h^{\frac{\delta +1}{\delta}}  (\log(h^{-1}))^{\frac{1}{\delta}} \big) & V = O((r + 1)^{-1 -\delta}),\, \delta > 3,\\
\exp \big(C h^{-\frac{4}{3}} \big) & V = O((r + 1)^{-1 -\delta}),\,  1 < \delta \le 3, \\
\exp \big( Ch^{-\frac{2\delta+ 2}{2\delta + 1}} \big( \log(h^{-1}) \big)^{\frac{\delta + 2}{2 \delta  + 1}} \big) & V = O((r + 1)^{-1 -\delta}), \, 0 < \delta \le 1,  \\
\exp \big( Ch^{-2}\big) &  V = O((r +1)^{-1} \log(r + 2)^{-\rho}),\, \rho > 1.
\end{cases}
\end{equation*}
Thus, another way to interpret how Theorem \ref{Linfty thm} extends the previous literature is that it shows arbitrary short-range potentials have resolvent bounds similar to those for short-range radial potentials, though additional losses remain. 

Bounds on $g^{\pm}_s$ are known to hold under various geometric, regularity, and decay assumptions. Burq \cite{bu98, bu02} showed  $g^{\pm}_s \le e^{Ch^{-1}}$ for $V$ smooth and decaying sufficiently fast near infinity, and also for more general perturbations of the Laplacian. Cardoso and Vodev \cite{cavo02} extended Burq's estimate to infinite volume Riemannian manifolds which may contain cusps. This exponential behavior is sharp in general, see \cite{ddz15} for exponential resolvent lower bounds. On $\R^n$, $n \ge 2$, $g^{\pm}_s \le e^{Ch^{-1}}$ still holds if $V$ has long-range decay and Lipschitz regularity with respect to the radial variable \cite{da14, sh19, vo20c, gash22a, ob23}. Potentials with singularities near zero are treated in \cite{gash22a, ob23}, and in particular \cite{ob23} requires $\partial_rV(r, \theta)\mathbf{1}_{r \le 1} = O(r^{-j - \tilde{\beta}})$, for
\begin{equation} \label{beta tilde}
 0 < \tilde{\beta} < 4(\sqrt{2} -1) \approx 1.657
\end{equation}
 and  $j = 0,\, 1$. In one dimension, $g^{\pm}_s \le e^{Ch^{-1}}$ if $V$ is a finite Borel measure \cite{lash23}.

In contrast, if $V : \R^n \to \R$, $n \ge 2$, has purely $L^\infty$ terms, it is an open problem to determine whether the bounds \eqref{Linfty resolv est delta 1},  \eqref{Linfty resolv est delta between 0 1}, and \eqref{Linfty resolv est delta 0} have optimal $h$-dependence. Further works on resolvent estimates with little regularity assumed are \cite{vo14, rota15, dadeh16, klvo19,  vo19, dash20, sh20, vo20a}. 

To prove Theorem \ref{Linfty thm}, we establish a global Carleman estimate \eqref{Carleman est}. This Carleman estimate is the byproduct of patching together what we call the away-from-origin estimate \eqref{final est} and the near-origin estimate \eqref{obovu est}. 

The away-from-origin estimate is an application of the so called energy method, a well established tool for proving semiclassical Carleman estimates. In particular, we combine and update the approaches from \cite[Section 3]{gash22b} and \cite[Section 3]{ob23}, to construct the weight $w(r)$ and phase $\varphi(r)$, which are key inputs to the energy method. 

Near the origin, $w(r)$ should vanish like $r^2$, to absorb the singular behavior of both $V$ and, in dimension two, the so called effective potential $r^{-2}(n-1)(n-3)$ (the latter arising after we separate variables in Section \ref{prelim section}). In our situation, $V_0$ is only $L^\infty$ near zero. In the proof of Proposition \ref{p:key} below, this necessitates $8 - 4\beta - \beta^2 >0$, which is a stronger requirement than what is needed if $V_0$ has some radial regularity (see \cite[Section 3]{ob23}). This is the source of the discrepancy between \eqref{beta} and \eqref{beta tilde}.

 Away from the origin, roughly speaking, $w(r) > 0$ should increase and have $w'(r) \sim \langle r \rangle^{-s}$, to furnish the weights appearing in \eqref{defn g}. Meanwhile, the main task of $\varphi'(r) > 0$ is control $V_S$ without becoming too large, so as to keep $\varphi(r)$ bounded. Since $V_S$ may decay slowly toward infinity, this is a delicate balancing act, and the compromise we strike is that $\varphi'(r)$ have comparably slow decay for $r > h^{-M}$ and suitable $M \gg 1$, see \eqref{phi prime}. Our choice of $M$, see \eqref{defn M}, is inspired by \cite[Section 2]{vo20b} and more refined compared to \cite[Section 3]{gash22b}. This is why we can handle decay slower than that treated in \cite{gash22b}.

The near-origin estimate was proved by Obovu \cite[Lemma 2.2]{ob23} using the Mellin transform, building on an earlier study of radial potentials \cite{dgs23}. It makes up for the loss in the away-from-origin estimate stemming from the vanishing of $w(r)$ as $r \to 0$. We emphasize that this vanishing of $w(r)$ is essential in dimension two, even if $V_0$ is not singular, because in that case the effective potential has an unfavorable sign.

Resolvent bounds like \eqref{Linfty resolv est delta 1}, \eqref{Linfty resolv est delta between 0 1}, and \eqref{Linfty resolv est delta 0} have application to local energy decay for the wave equation
\begin{equation} \label{wave equation}
\begin{cases}
(\partial_t^2 - c^2(x)\Delta) u(x,t) = 0, & (x,t) \in \left(\mathbb{R}^n \setminus \Omega \right) \times (0, \infty), \, n \ge 2, \\
 u(x,0) = u_0(x),\\
 \partial_t u(x,0) = u_1(x), \\
 u(t,x) = 0, & (x,t) \in  \partial \Omega \times (0,\infty),
\end{cases}
\end{equation}
where $\Omega$ is a compact (possibly empty) obstacle with smooth boundary, and the initial data are compactly supported. A general logarithmic decay rate was first proved by Burq \cite{bu98, bu02} for $c$ smooth. Similar decay was subsequently established for $\Omega = \emptyset$ and $c \in L^\infty(\R^n ; (0, \infty))$ bounded from above and below and identically one outside of a compact set \cite[Theorem 2]{sh18}. See also \cite{be03, cavo04, bo11, mo16, ga19}. Since Theorem \ref{Linfty thm} allows the potential to be singular as $r \to 0$, we expect \cite[Theorem 2]{sh18} extends to $c$ which tends to $0$ at a point. However, for such a $c$, the low frequency character of the solution to \eqref{wave equation} still needs to be accounted for (see, e.g., \cite[Section 4]{sh18}). This question will be taken up elsewhere. 

It's worth mentioning that, in dimension $n \ge 3$, the hypotheses of Theorem \ref{Linfty thm} hold for potentials $V$ which are ``Coulomb-like" near $r = 0$, i.e., obeying $V = O(r^{-1})$ as $r \to 0$. However, the assumption \eqref{V0} does not capture such behavior in dimension two, because in that case $r^{-1}$ is not in $L^2$ near the origin. For Coulomb-like $V$ in dimension two, one can use a quadratic form to show that $P = -h^2 \Delta + V$ is self-adjoint with respect to $ \mathcal{D} \defeq \{u \in H^1(\R^2): Pu \in L^2(\R^2) \}$ \cite[Proposition 1.1]{ch90}. However, it seems difficult to use the method of this paper to prove resolvent estimates for $(P, \mathcal{D})$. This is because our Carleman estimate holds only for functions in $C^\infty_0(\R^2)$. While it is well known that $C^\infty_0(\R^n)$ is dense in $H^2(\R^n)$ for any $n \ge 2$, it is not evident from the standard result on essential self-adjointness for singular potentials \cite[Theorem 2]{si73} that a similar class of smooth functions is dense in $(\mathcal{D}, \| \cdot \|_{\mathcal{D}}),$ where $\|u\|_{\mathcal{D}} \defeq ( \|Pu\|^2_{L^2} + \| u\|^2_{L^2} )^{1/2}$. This is a technical but nevertheless interesting issue that warrants further study.

 \medskip
\noindent{\textsc{Acknowledgements}} It is a pleasure to thank Kiril Datchev and Jeffrey Galkowski for helpful discussions, as well as the anonymous referee for helpful comments and corrections. The author gratefully acknowledges support from ARC DP180100589, NSF DMS 2204322, and from a 2023 Fulbright Future Scholarship funded by the Kinghorn Foundation and hosted by University of Melbourne. The author affirms that there is no conflict of interest.

\section{Preliminary calculations and overview of proof of Theorem \ref{Linfty thm}} \label{prelim section}

In this section, we set the stage for proving Theorem \ref{Linfty thm} by means of the energy method, which has proven to be a dependable tool for establishing resolvent estimates in low regularity (see, e.g., \cite{cavo02, da14, gash22b, ob23}). Throughout this section, we take $P$ as in \eqref{P}, and assume the potential $V$ obeys \eqref{V0} through \eqref{delta}.

 We work in polar coordinates, beginning from the well known identity
\begin{equation*}
    r^{\frac{n-1}{2}}(- \Delta) r^{-\frac{n-1}{2}} = -\partial^2_r + r^{-2} \Lambda,
\end{equation*}
where 
\begin{equation} \label{Lambda}
    \Lambda \defeq -\Delta_{\US^{n-1}} + \frac{(n-1)(n-3)}{4} \ge - \frac{1}{4},
\end{equation}
and $\Delta_{\US^{n-1}}$ denotes the negative Laplace-Beltrami operator on $\US^{n-1}$. Let $\varphi$ be a soon-to-be-constructed phase function on $[0, \infty)$, which is locally absolutely continuous, and obeys $\varphi, \, \varphi' \ge 0$ and $\varphi(0) = 0$. Using $\varphi$, we form the conjugated operator
\begin{equation} \label{conjugation}
\begin{split}
  P^{\pm}_\varphi(h) &\defeq e^{\frac{\varphi}{h}} r^{\frac{n-1}{2}}\left( P(h) - E \pm i\varepsilon \right) r^{-\frac{n-1}{2}} e^{-\frac{\varphi}{h}}\\
  &= -h^2\partial^2_r + 2h \varphi' \partial_r + h^2r^{-2} \Lambda + V -(\varphi')^2 + h\varphi''  - E \pm i\varepsilon.
 \end{split}
\end{equation}

For $u \in e^{\varphi/h} r^{(n-1)/2} C^\infty_0(\R^n)$, define a spherical energy functional,
\begin{equation} \label{F high dim}
    F(r) = F[u](r) \defeq \|hu'(r, \cdot)\|^2 - \langle (h^2r^{-2}\Lambda + V_L  -(\varphi')^2- E)u(r, \cdot), u(r, \cdot) \rangle,
\end{equation}
where $\| \cdot \|$ and $\langle \cdot, \cdot \rangle$ denote the norm and inner product on $L^2(\mathbb{S}_\theta^{n-1})$, respectively. For a weight $w\in C^0[0,\infty)$ that is piecewise $C^1$, the distribution $(wF)'$ on $(0,\infty)$ is given by

\begin{equation} \label{deriv wF}
\begin{split}
    (wF)' &= w'F + wF' \\
    &= w'\|hu'\|^2 - w'\langle (h^2r^{-2} \Lambda + V_L - (\varphi')^2- E)u, u \rangle \\
    & -2 w \real \langle P^{\pm}_\varphi(h) u, u' \rangle + 2wr^{-1} \langle h^2 r^{-2}\Lambda u, u \rangle + w((\varphi')^2 -V_L )' \| u\|^2 + 4h^{-1} w \varphi' \|hu'\|^2 \\
    & \mp 2\varepsilon w \imag \langle u,u'\rangle + 2 w\real \langle (V_0 + V_S+ h \varphi'') u, u' \rangle \\
    &= -2 \real w \langle P^{\pm}_\varphi(h) u, u' \rangle \mp 2\varepsilon w \imag \langle u,u'\rangle + w q \langle h^2 r^{-2} \Lambda u,u\rangle \\
    &+ (4h^{-1}w \varphi' + w')\|hu'\|^2  + (w(E+ (\varphi')^2- V_L ))' \|u\|^2 + 2w\real \langle (V_0 + V_S + h \varphi'') u, u' \rangle.
    \end{split}
\end{equation}
where we have put
\begin{equation} \label{q}
q = q(r) \defeq \frac{2}{r} - \frac{w'}{w}.
\end{equation}

We shall construct $w$ so that $w, \, w' > 0$ and $q \ge 0$. Then using \eqref{deriv wF} and $2ab \ge -(\gamma a^2 + \gamma^{-1}b^2)$ for all $\gamma > 0$, we find
\begin{equation}
\label{e:lowerDerivative1}
\begin{aligned}
    w' F + w F' &\ge -\frac{\gamma_1 w^2}{h^2w'} \| P^{\pm}_\varphi(h)u \|^2 \mp  2\varepsilon w \imag \langle u,u'\rangle  \\
    &+ (4( 1 -\gamma^{-1}_2)h^{-1} w \varphi' + (1 - \gamma^{-1}_1 - \gamma^{-1}_2) w')\|hu' \|^2 \\
    &+ \big((w(E + (\varphi')^2 - V_L))' - \frac{h^2 wq}{4r^2}   -\frac{\gamma_2 w^2 |h^{-1}(V_0 + V_S) + \varphi''|^2}{w' + 4h^{-1}\varphi' w} \big)\|u\|^2 , \qquad \gamma_1, \, \gamma_2 > 0.
    \end{aligned}
\end{equation}
For $\gamma > 0$, put $\gamma_1 = 2(1 + \gamma)/\gamma, \, \gamma_2 = (1 + \gamma)$, yielding
\begin{equation}
\label{e:lowerDerivative2}
\begin{split}
    (wF)'
      &\ge -\frac{2(1 + \gamma)w^2}{\gamma h^2w'} \| P^{\pm}_\varphi(h) u \|^2 \mp 2 \varepsilon w \imag \langle u, u' \rangle +  \frac{\gamma}{2(1 + \gamma)} w' \|hu'\|^2  \\
    &+ \big((w(E + (\varphi')^2 - V_L))' - \frac{h^2 wq}{4r^2}   -\frac{(1 + \gamma)w^2 |h^{-1}(V_0 + V_S) + \varphi''|^2}{w' + 4h^{-1}\varphi' w} \big)\|u\|^2. 
    \end{split}
\end{equation}

In Section \ref{resolv est section}, we show how Theorem \ref{Linfty thm} follows from a certain global Carleman estimate, see Lemma \ref{Carleman lemma}. An essential ingredient for this Carleman estimate is to specify $\varphi$ and $w$ as precisely as possible, in order that the second line of \eqref{e:lowerDerivative2} has a good lower bound. More precisely, putting 
\begin{equation} \label{A and B}
A(r)\defeq (w(E+ (\varphi')^2 - V_L))' - \frac{h^2 wq}{4r^2},\qquad B(r) \defeq \frac{w^2|h^{-1}(V_0 +V_S) + \varphi''|^2}{w' + 4h^{-1}\varphi' w},
\end{equation}
we shall see that it suffices for $w$ and $\varphi$ to satisfy, for suitable $\gamma > 0$,
\begin{equation}
\label{e:goalEst}
A(r)- (1 + \gamma)B(r) \geq \frac{E}{2}w'(r), \qquad 0 < h \ll 1.
\end{equation}
To facilitate the proof of \eqref{e:goalEst}, we proceed, as in \cite{gash22a, gash22b, ob23}, to analyze $A$ and $B$ in terms of the auxiliary functions 
\begin{equation}\label{e:defPhiW}
\Phi \defeq \frac{\varphi''}{\varphi'}=(\log |\varphi'|)',\qquad \mathcal{W} \defeq \frac{w}{w'}=\frac{1}{(\log |w|)'}.
\end{equation}
In particular, from \eqref{A and B} and \eqref{e:defPhiW},

\begin{equation} \label{e:keyCalc}
\begin{split}
A(r)-(1 + \gamma)B(r)&\geq w'\Big[ E+(\varphi')^2\big(1+2\mathcal{W}\Phi - 2(1 + \gamma) \mathcal{W}|\Phi|^2 \min \big( \mathcal{W}, \frac{h}{4 \varphi'} \big)\big)  \\
&-2(1 + \gamma)h^{-2}\mathcal{W} |V_0 + V_S|^2 \min \big( \mathcal{W}, \frac{h}{4 \varphi'} \big)-V_L-\mathcal{W} \big(V'_L  + \frac{h^2q}{4r^2}\big)\Big].
\end{split}
\end{equation}

So, to show \eqref{e:goalEst}, it is enough to bound the bracketed expression in \eqref{e:keyCalc} from below by $E/2$. The next section is devoted to constructing $w$ and $\varphi$, and their corresponding $\mathcal{W}$ and $\Phi$, that will bring about \eqref{e:goalEst}. 

\section{Determination of the weight and phase} \label{weight and phase section}

In this section, we develop the functions $w$ and $\varphi$, and their associated $\mathcal{W}$ and $\Phi$, as in \eqref{e:defPhiW}. They play an essential role in the proof of the lower bound \eqref{e:goalEst} for $A - (1 + \gamma)B$ (Proposition \ref{p:key}), and in the proof of the Carleman estimate (Lemma \ref{Carleman lemma}). We should keep in mind that $A$ and $B$ (see \eqref{A and B}) depend not only on $w$ and $\varphi$, but also on a potential $V$ that obeys \eqref{V0} to \eqref{delta}. 

First, we fix
\begin{gather}
\sigma \defeq \frac{1}{3}, \label{sigma} \\
\tilde{\rho} \in (1, \rho], \qquad \text{$\rho$ as in \eqref{delta}} \label{tilde rho}.
\end{gather}
Using \eqref{VL r big} and \eqref{VL prime r big}, fix $b > 0$ independent of $h$ large enough so that 
\begin{equation} \label{b}
V_L, \, \frac{r}{2}V_L' \le \frac{E_{\min}}{8}, \qquad r > b. 
\end{equation}

Next, we introduce several quantities depending on the semiclassical parameter $h$ and on $\delta$ as in \eqref{VS bound}. These quantities also involve parameters  $T > 0$,  $t \ge 1$ that are independent of $h$ and will be specified in the proof of Proposition \ref{p:key}:
\begin{gather}
 \lambda \defeq  \log (\log(h^{-1})), \label{lambda}\\
\eta \defeq (\log(h^{-1}))^{-1}, \label{eta} \\
k \defeq \begin{cases} 1 & \delta = 1,
 \\ \frac{1 + 2\delta - \lambda^{-1}}{3} & 0 < \delta < 1, \\
\frac{1}{3} & \delta = 0, \end{cases} \label{k} \\
  M \defeq \begin{cases}  \frac{\sigma}{k} + T \eta \lambda  & 0< \delta \le 1, \\
   1 + T \eta \lambda + t \eta & \delta = 0, \end{cases} \label{defn M}\\
a \defeq h^{-M}. \label{defn a} 
\end{gather}
In this and later sections, we always assume $h$ is restricted to $(0, h_\delta]$, where $h_\delta \in (0,1)$ is small enough so that
\begin{equation} \label{implication}
h \in (0, h_\delta] \implies \eta \lambda \in (0, 1], \, k \in [\tfrac{1}{3}, 1], \text{ and }\eta \le \begin{cases} \min(\delta, \frac{1}{3}) & 0 < \delta \le 1 \\
1 & \delta = 0
 \end{cases}.
\end{equation}
In particular, from $h^{\eta \lambda} = \eta$, and $h^{\eta} = e^{-1}$, \eqref{defn M}, \eqref{defn a}, \eqref{implication},
 \begin{equation} \label{lwr bd a}
 a = h^{-M} = \begin{cases} 
  h^{-\frac{\sigma}{k} - T\eta \lambda} =  h^{-\frac{\sigma}{k}} (\log(h^{-1}))^T \ge h^{-\frac{1}{3}} & 0 < \delta \le 1 \\
   h^{-1- T\eta \lambda - t\eta} = e^{t} h^{-1} (\log(h^{-1}))^T \ge h^{-1} & \delta = 0
  \end{cases}.
 \end{equation}

Our weight $w$ and phase $\varphi$ are:
\begin{gather}
 \label{w}
w(r) \defeq \begin{cases} r^2 &0 < r \le a, \\
a^2e^{\int_a^r \max ( \frac{2}{s \mathcal{G}(s)}, \frac{4m_L(s)}{Es}) ds} & r > a.
\end{cases}\\
\varphi_0'(r) \defeq \begin{cases}
 r^{-\tfrac{\beta}{2}} & 0 < r \le 1,\\[6pt]
 e^{-\int_{1}^r \frac{k}{s+\Phi_1(s)}ds}& 1 < r \le a,\\[6pt]
\varphi_0'(a)\frac{a\mathcal{G}(a)}{r\mathcal{G}(r)}&r > a,
\end{cases} \label{phi prime}\\
\varphi_0(r) \defeq \int^r_0\varphi'_0(s)ds, \qquad r > 0, \label{phi0} \\
\varphi(r) \defeq \tau h^{-\sigma} \varphi_0(r), \qquad r > 0, \, \tau \ge 1, \label{tau}
\end{gather}
where
\begin{gather}
\mathcal{G}(r) \defeq \begin{cases} r^\eta & \delta > 0, \\
(\log r)^{\tilde{\rho}} & \delta = 0,
 \end{cases} \label{G}
 \\  \Phi_1 \defeq \frac{\kappa r(\tilde{m}^2_S + \chi + (y + m_L) \mathbf{1}_{1 < r \le b})}{1 - \kappa(\tilde{m}^2_S + \chi + (y + m_L) \mathbf{1}_{1 < r \le b})},   \qquad \kappa \in (0, 1/8], \label{Phi1} \\
 \tilde{m}_S(r) = \begin{cases} m_S(r) & \delta = 0 \text{ or }1,  \\ 
 r^{- \lambda^{-1}/2} & 0 < \delta < 1.
 \end{cases} \label{tildem} 
\end{gather}
The parameters $\tau$ and $\kappa$ are independent of $h$ and will be fixed in the proof of Proposition \ref{p:key}. Note that the denominator of $\Phi_1$ is at least $1/2$ since $0 \le \tilde{m}_S, \chi, y, m_L \le 1$ and $\kappa \in (0, 1/8]$, where $y$ is given by \eqref{y}.

Recalling that $\mathcal{W}$ and $\Phi$ are defined by \eqref{e:defPhiW}, we use \eqref{w} and \eqref{phi prime} to calculate
\begin{gather} \
\mathcal{W}(r) = \begin{cases} 
\frac{r}{2} & 0 < r < a, \\[6pt]
\frac{r}{2} \min( \mathcal{G}(r), \frac{E}{2m_L(r)} ) & r > a,
\end{cases} \label{W} \\
\Phi(r) = \begin{cases} 
-\frac{\beta}{2r} & 0 < r < 1, \\[6pt]
 - \frac{k}{r  + \Phi_1(r)}  & 1 < r < a, \\[6pt]
-\frac{1 + \eta}{r} & 0 < \delta \le 1,  \, r > a, \\[6pt]
-\frac{1 + \tilde{\rho}(\log r)^{-1} }{r} & \delta = 0, \, r > a. 
\end{cases} \label{Phi} 
\end{gather}

To conclude this section, we collect several basic properties of $w$, and an elementary Lemma about $\Phi_1$, which are important to the proofs of the lower bound \eqref{e:goalEst} and the Carleman estimate.

\begin{lemma}
There exists $C$ independent of $h$ so that for all $h \in (0,h_\delta]$, 
\begin{gather}
w(r) \le C h^{-2-2M} \label{univ bd w}, \qquad r > 0,\\
    w'(r) \ge (\log(h^{-1}))^{-C} r^{-1-\eta} \label{univ lwr bd w prime}, \qquad r > a,  \\
    \frac{w(r)^2}{w'(r)} \le C (\log(h^{-1}))^{C} h^{-2 - 2M} r^{1+\eta}, \qquad \, r \neq  a. \label{w squared over w prime}
 \end{gather}
\end{lemma}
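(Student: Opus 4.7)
My approach is a case analysis based on whether $r \le a$ or $r > a$, and for the latter, whether $\delta > 0$ (where $\mathcal{G}(r) = r^\eta$) or $\delta = 0$ (where $\mathcal{G}(r) = (\log r)^{\tilde\rho}$). The one recurring computational identity I would exploit is $h^\eta = e^{-1}$, which follows from $\eta \log(h^{-1}) = 1$; this gives $a^{-\eta} = h^{M\eta} = e^{-M}$ and $a^{\eta} = e^M$, both bounded quantities since $M$ is bounded above by a constant independent of $h$ on $(0, h_\delta]$ by \eqref{defn M} and \eqref{implication}.

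For \eqref{univ bd w}, the bound is immediate for $r \le a$ since $w(r) = r^2 \le a^2 = h^{-2M}$. For $r > a$, I would estimate the exponent by splitting the maximum:
\begin{equation*}
\int_a^r \max\Big(\tfrac{2}{s\mathcal{G}(s)}, \tfrac{4m_L(s)}{Es}\Big)\,ds \le \int_a^\infty \tfrac{2}{s\mathcal{G}(s)}\,ds + \tfrac{4}{E}\int_1^\infty \tfrac{m_L(s)}{s}\,ds.
\end{equation*}
The second integral is a finite constant by \eqref{mL}. The first equals $\tfrac{2}{\eta}a^{-\eta} = 2 e^{-M}\log(h^{-1})$ when $\delta > 0$, and $\tfrac{2}{\tilde\rho-1}(M\log(h^{-1}))^{-(\tilde\rho-1)}$ when $\delta = 0$ (by the substitution $u = \log s$), the latter tending to $0$. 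Exponentiating, the multiplicative factor relating $w(r)$ to $w(a) = h^{-2M}$ is at most $e^C h^{-2e^{-M}} \le e^C h^{-2}$ (using $e^{-M} \le 1$), which yields \eqref{univ bd w}.

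For \eqref{univ lwr bd w prime}, I would use monotonicity of $w$ on $[a,\infty)$ to write $w'(r) = w(r) \max\bigl(\tfrac{2}{r\mathcal{G}(r)}, \tfrac{4m_L(r)}{Er}\bigr) \ge \tfrac{2 h^{-2M}}{r\mathcal{G}(r)}$. If $\delta > 0$, this reads $w'(r) \ge 2h^{-2M} r^{-1-\eta}$, which is much stronger than needed. If $\delta = 0$, I would trade the factor $(\log r)^{\tilde\rho}$ for a factor $r^\eta$ via the elementary observation that $r \mapsto (\log r)^{\tilde\rho} r^{-\eta}$ is maximized on $[a,\infty)$ at $r_* = h^{-\tilde\rho}$, with value $(\tilde\rho/e)^{\tilde\rho}(\log(h^{-1}))^{\tilde\rho}$. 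The bound \eqref{univ lwr bd w prime} then follows with $C$ absorbing both $2M$ and $\tilde\rho$.

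For \eqref{w squared over w prime}, in the region $r < a$ I would use $w = r^2$, $w' = 2r$ to reduce the claim to $r^{2-\eta} \le C(\log(h^{-1}))^C h^{-2-2M}$; this holds because $r^{2-\eta} \le a^{2-\eta} = h^{-2M + M\eta} \le h^{-2M} \le h^{-2-2M}$. In the region $r > a$, the identity $w^2/w' = w/\max(\cdots) \le w r \mathcal{G}(r)/2$, combined with the upper bound \eqref{univ bd w} just proved, yields the estimate directly when $\delta > 0$; when $\delta = 0$, the same maximization of $(\log r)^{\tilde\rho} r^{-\eta}$ absorbs the logarithmic factor into a power of $\log(h^{-1})$ times $r^\eta$. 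No individual step is intricate; the main accounting task is tracking the powers of $\log(h^{-1})$ in the $\delta = 0$ case, where the identity $a^\eta = e^M$ is what keeps the exponent of $h$ from blowing up.
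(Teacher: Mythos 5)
Your proposal is correct and follows essentially the same route as the paper's proof: both exploit the monotonicity of $w$, the identity $\eta^{-1}=\log(h^{-1})$ (equivalently $h^\eta=e^{-1}$), the integrability hypothesis on $m_L/s$, and a comparison of $(\log r)^{\tilde\rho}$ with $r^\eta$ in the $\delta=0$ case. The only cosmetic difference is that you locate the maximum of $r\mapsto(\log r)^{\tilde\rho}r^{-\eta}$ at $r_*=h^{-\tilde\rho}$, whereas the paper derives the same comparison $(\log r)^{\tilde\rho}\le(\tilde\rho/\eta)^{\tilde\rho}r^\eta$ directly from $x\ge\log x$; both yield the stated $(\log(h^{-1}))^{\pm C}$ factors.
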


\begin{proof}
 To see~\eqref{univ bd w}, note that from \eqref{w} $w$ is clearly increasing, so we need only compute $\limsup_{r\to \infty} w(r)$. By \eqref{eta}, \eqref{defn a} and \eqref{G}, for some $C > 0$ independent of $h \in (0,h_\delta]$,
 \begin{equation*}
\begin{split}
\limsup_{r\to \infty}w(r) &=\limsup_{r \to \infty} \, a^2e^{\int_a^r \max ( \frac{2}{s \mathcal{G}(s)}, \frac{4 c_L m_L(s)}{Es}) ds}\\
& \le \begin{cases} Ca^2 e^{2/\eta}= C h^{-2 -2M}  & \delta > 0\\ 
Ca^2 = Ch^{-2M} & \delta = 0 \end{cases}. 
\end{split}
\end{equation*}

For~\eqref{univ lwr bd w prime}, we use \eqref{w} to compute $w'$ for $r > a$:
$$
w'(r) = a^2 e^{\int_a^r \max ( \frac{2}{s \mathcal{G}(s)}, \frac{4m_L(s)}{Es}) ds} \max \big( \frac{2}{r \mathcal{G}(r)}, \frac{4m_L(r)}{Er}\big)\geq (\log(h^{-1}))^{-C}r^{-1-\eta},
$$
for some constant $C > 0$ independent of $h \in (0,h_\delta]$, where when $\delta = 0$ we have used that
\begin{equation*}
\frac{r^\eta}{(\log r)^{\tilde{\rho}}} = \Big( \frac{r^{\eta/\tilde{\rho}}}{\log r}  \Big)^{\tilde{\rho}} \ge \Big( \frac{\log(r^{\eta/\tilde{\rho}})}{\log r}  \Big)^{\tilde{\rho}} =  \frac{\eta^{\tilde{\rho}}}{\tilde{\rho}^{\tilde{\rho}} }, \qquad r > 1.
\end{equation*}
Finally, \eqref{w squared over w prime} follows from \eqref{univ bd w} and
\begin{equation*}
\frac{w}{w'} = \mathcal{W} \le \begin{cases} \tfrac{r}{2} & 0 < r < a \\[6pt]
\tfrac{r^{1 + \eta}}{2} & r > a, \, 0 < \delta \le 1 \\[6pt]
\tfrac{r(\log r)^{\tilde{\rho}}}{2} & r > a, \, \delta = 0
 \end{cases}.
\end{equation*}
\end{proof}

\begin{lemma} [{\cite[Lemma 2.1]{gash22b}}]
\label{l:phi}
It holds that 
\begin{equation} \label{int Phi bounds} 
-\log r \leq -\int_1^r \frac{1}{s+\Phi_1(s)} ds\leq  -\log r +\|s^{-2}\Phi_1(s)\|_{L^1(1,\infty)}.
\end{equation}
\end{lemma}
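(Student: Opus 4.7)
The plan is to establish both inequalities by exploiting the non-negativity of $\Phi_1$. Observe first that $\Phi_1(s) \geq 0$ for all $s \geq 1$: by \eqref{Phi1}, the numerator is non-negative (since $\tilde{m}_S, \chi, y, m_L \geq 0$ and $\kappa > 0$), and the denominator is at least $1/2$ by the remark following \eqref{tildem}. Consequently $s + \Phi_1(s) \geq s > 0$, so $\frac{1}{s + \Phi_1(s)} \leq \frac{1}{s}$, which gives the left inequality immediately upon integrating from $1$ to $r$ and negating.

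For the right inequality, I would add and subtract $1/s$ inside the integrand:
\begin{equation*}
-\int_1^r \frac{1}{s+\Phi_1(s)}\,ds = -\int_1^r \frac{1}{s}\,ds + \int_1^r \left(\frac{1}{s} - \frac{1}{s+\Phi_1(s)}\right)ds = -\log r + \int_1^r \frac{\Phi_1(s)}{s(s+\Phi_1(s))}\,ds.
\end{equation*}
Using $s + \Phi_1(s) \geq s$ once more to bound the denominator, the remainder integral is controlled by
\begin{equation*}
\int_1^r \frac{\Phi_1(s)}{s(s+\Phi_1(s))}\,ds \leq \int_1^r \frac{\Phi_1(s)}{s^2}\,ds \leq \|s^{-2}\Phi_1(s)\|_{L^1(1,\infty)},
\end{equation*}
which yields the right inequality.

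There is no substantive obstacle here; the only point to verify carefully is that $\|s^{-2}\Phi_1(s)\|_{L^1(1,\infty)}$ is indeed finite (so that the stated bound is meaningful), but this is assured by the definition \eqref{Phi1} together with the decay/integrability assumptions on $\tilde{m}_S$, $y$, and $m_L$ in \eqref{y}, \eqref{mL}, and \eqref{delta}, and by the compact support of $\chi$ and of $\mathbf{1}_{1 < r \leq b}$. Since this lemma is quoted directly from \cite{gash22b}, the author will likely just cite it, but the argument above suffices.
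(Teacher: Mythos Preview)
Your proof is correct and follows essentially the same approach as the paper: both arguments rewrite $\log r - \int_1^r \frac{1}{s+\Phi_1(s)}\,ds$ as $\int_1^r \frac{\Phi_1(s)}{s(s+\Phi_1(s))}\,ds$ and then use $\Phi_1 \ge 0$ to bound this between $0$ and $\|s^{-2}\Phi_1(s)\|_{L^1(1,\infty)}$. The only cosmetic difference is that the paper derives both inequalities simultaneously from this identity, whereas you first obtain the left inequality directly from $s+\Phi_1(s)\ge s$.
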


\begin{proof}
We note first that $\|s^{-2}\Phi_1(s)\|_{L^1(1,\infty)} < \infty$ thanks to \eqref{Phi1}, \eqref{tildem}, and \eqref{delta}. The estimate \eqref{int Phi bounds} follows by combining
\begin{align*}
\log (r)-\int_1^r \frac{1}{s+\Phi_1(s)}ds&=\int_1^r \frac{1}{s}-\frac{1}{s+\Phi_1(s)}ds\\
&=\int_1^r\frac{\Phi_1(s)}{s(s+\Phi_1(s))}ds
\end{align*}
with 
$$
0\leq \int_1^r\frac{\Phi_1(s)}{s(s+\Phi_1(s))}ds\leq \|s^{-2}\Phi_1(s)\|_{L^1(1,\infty)}.
$$
\end{proof}

\section{Proof of the main estimate} \label{main estimate section}

\begin{proposition}  
\label{p:key}
Suppose $V$ satisfies \eqref{V0} through \eqref{delta}. Fix $[E_{\min}, E_{\max}] \subseteq (0, \infty)$, $\e \in (0, 1)$, $\tilde{\rho} \in (1, \rho]$, and
\begin{equation} \label{gamma}
0 < \gamma < (8 - 4\beta - \beta^2)/\beta^2.
\end{equation}
Let $w$ and $\varphi$ be as constructed in Section \ref{weight and phase section}. 

There exist $T >0$ and $t \ge 1$ as in \eqref{defn M}, $\tau \ge 1$ as in \eqref{tau}, $\kappa \in (0,1/8]$ as in \eqref{Phi1},   $C_\delta > 0$, and $h_\delta \in (0,1)$, all independent $h$, so that
\begin{equation} \label{e:keyLower}
A(r)-(1 + \gamma)B(r)\geq \frac{E_{\min}}{2}w'(r), \qquad  E \in [E_{\min}, E_{\max}],\, h \in (0,h_\delta], \,  r \neq 1, \, a,
\end{equation}
and
\begin{equation} \label{e:phiBoundMe}
|\varphi_0(r)| 
\le  \begin{cases} C_\delta \log(h^{-1}) & \delta = 1 \\   C_\delta h^{-\frac{2(1 - \delta) + \lambda^{-1} }{3(1 + 2\delta - \lambda^{-1})}}(\log(h^{-1}))^{1 +  \e} & 0 < \delta <1 \\
C_\delta h^{-\frac{2}{3}}(\log(h^{-1}))^{1 + \tilde{\rho}} & \delta = 0 \end{cases}, \qquad h \in (0, h_\delta], \, r > 0. 
 \end{equation}
\end{proposition}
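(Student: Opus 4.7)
My plan is to verify \eqref{e:keyLower} region by region via the lower bound \eqref{e:keyCalc}, fixing the parameters $\tau$, $\kappa$, $T$, $t$ along the way, and to obtain \eqref{e:phiBoundMe} by integrating $\varphi_0'$ piecewise. I split $(0,\infty)\setminus\{1,a\}$ into the near-origin zone $(0,1)$, the intermediate zone $(1,a)$, and the far zone $(a,\infty)$; in each zone the simplifications of $\mathcal{W}$ and $\Phi$ from \eqref{W}, \eqref{Phi}, together with the explicit forms of $w$ and $\varphi'$, reduce the bracketed expression in \eqref{e:keyCalc} to a short list of terms that I compare with $E/2$ by direct substitution.

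On $(0,1)$, $V_L=V_S=0$, $q=0$, and $(\varphi')^2=\tau^2 h^{-2/3}r^{-\beta}$. Substitution shows the coefficient of $(\varphi')^2$ in \eqref{e:keyCalc} equals
\[
1-\tfrac{\beta}{2}-\tfrac{(1+\gamma)\beta^2}{4r}\min(r/2,\,hr^{\beta/2}/4)\;\ge\;C_1\defeq 1-\tfrac{\beta}{2}-\tfrac{(1+\gamma)\beta^2}{8},
\]
which is strictly positive by \eqref{gamma}; in the case $\min=hr^{\beta/2}/4$, the case hypothesis $r\ge(h/2)^{2/(2-\beta)}$ yields $hr^{\beta/2-1}\le 2$, recovering the bound $C_1$. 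Bounding the $V_0$ term from below using $|V_0|\le c_0 r^{-\beta}$ and $\min\le h/(4\varphi')$ yields a contribution of order $-h^{-2/3}r^{-\beta}\cdot c_0^2/\tau$, which the term $(\varphi')^2 C_1=\tau^2 h^{-2/3}r^{-\beta}C_1$ absorbs once $\tau$ is chosen independent of $h$ with $4\tau^3 C_1\ge(1+\gamma)c_0^2$, giving a lower bound well above $E/2$.

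On $(1,a)$, $w=r^2$, $\mathcal{W}=r/2$, $q=0$, and Lemma \ref{l:phi} gives $\varphi_0'\asymp r^{-k}$. Setting $\mu\defeq\tilde m_S^2+\chi+(y+m_L)\mathbf{1}_{1<r\le b}$, the design of $\Phi_1$ gives the key identity $r+\Phi_1=r/(1-\kappa\mu)$, so
\[
1+2\mathcal{W}\Phi=1-\tfrac{kr}{r+\Phi_1}=(1-k)+k\kappa\mu.
\]
Thus the $(\varphi')^2$ coefficient in \eqref{e:keyCalc} splits into a nonnegative autonomous piece $1-k$ and a source piece $k\kappa\mu$; the latter, multiplied by $(\varphi')^2\asymp h^{-2/3}r^{-2k}$, absorbs the contributions of $V_S^2$, $V_0^2$, $V_L$, and $\mathcal{W}V_L'$ on $\{1<r\le b\}$. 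For $V_0$ on $\{1<r<2\}$ one uses the refined bound $|V_0|\lesssim\chi$ (which follows from $V=V_0+V_L+V_S$ being in $L^\infty_{\mathrm{loc}}$ there); for $r>b$, the estimate \eqref{b} controls $V_L$ and $V_L'$ directly, while the $k\kappa\tilde m_S^2$ component of $\mu$ still handles $V_S$. The quadratic $V_S$ balance at $r=a$ is what forces the choice $k=(1+2\delta-\lambda^{-1})/3$ when $0<\delta<1$ (and $k=1$, $k=1/3$ when $\delta=1,0$), and the refinement $\tilde m_S=r^{-\lambda^{-1}/2}$ for $0<\delta<1$ makes the absorption self-consistent up to the transition radius $r=a$. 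Choosing $\kappa\in(0,1/8]$ sufficiently small (absolutely) completes the lower bound here.

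On $(a,\infty)$, $w$ grows exponentially with $w'/w=\max(2/(r\mathcal{G}),\,4m_L/(Er))$ by \eqref{w}, while $(\varphi')^2$ decays. The $V_L$ and $V_L'$ terms in \eqref{e:keyCalc} are controlled by \eqref{b} and the $4m_L/(Er)$ branch of $w'/w$; the $h^2q/(4r^2)$ term is of lower order once $r\ge h^{-M}$, the $V_0$ term vanishes, and the $V_S^2$ term is dominated by $Ew'/8$ via the $2/(r\mathcal{G})$ branch together with the decay of $V_S$ and the choice of $M$ in \eqref{defn M}, which is the smallest value making this balance work at $r=a$. For \eqref{e:phiBoundMe}, the piecewise integration of $\varphi_0'$ gives an $O(1)$ contribution from $(0,1)$, an $O(a^{1-k}/(1-k))$ contribution (or $O(\log a)$ when $k=1$) from $(1,a)$, and an $O(\varphi_0'(a)\,a\log(h^{-1}))$ contribution (or $O(\varphi_0'(a)\,a\log a/(\tilde\rho-1))$ when $\delta=0$) from $(a,\infty)$; substituting $a=h^{-M}$ with \eqref{defn M} and using $h^{\eta\lambda}=\eta$, $h^\eta=e^{-1}$ produces the three stated bounds, once $T$ (and $t$ when $\delta=0$) is chosen large enough to absorb a factor $(\log(h^{-1}))^{T(1-k)}$ into $(\log(h^{-1}))^{1+\e}$ or $(\log(h^{-1}))^{1+\tilde\rho}$. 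The main obstacle is the $V_S$ balance at $r=a$: the choices of $k$, $M$, and $\tilde m_S$ must simultaneously ensure (i) absorption of $V_S$ by the $\kappa\tilde m_S^2$ piece of $\Phi_1$ in the intermediate zone, (ii) that $\varphi_0'(a)\cdot a$ respects the right-hand side of \eqref{e:phiBoundMe}, and (iii) that the $w'$-controlled far-zone balance still works at $r=a$; this coupling dictates the $h$-dependence in \eqref{e:phiBoundMe}.
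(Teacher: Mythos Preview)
Your regional decomposition and the identities you extract (especially $r+\Phi_1=r/(1-\kappa\mu)$ and $1+2\mathcal{W}\Phi=(1-k)+k\kappa\mu$ on $(1,a)$) match the paper's argument exactly, and your treatment of $(0,1)$ is correct. However, there is a genuine gap in how you close the estimate on $(1,a)$.

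You write that ``choosing $\kappa\in(0,1/8]$ sufficiently small (absolutely) completes the lower bound here.'' This is backward. The absorption you need is, schematically,
\[
(\varphi')^2\,k\kappa\,\tilde m_S^2 \;\gtrsim\; 2(1+\gamma)h^{-2}\mathcal W|V_S|^2\cdot\frac{h}{4\varphi'},
\]
which after substituting $\varphi'\asymp \tau h^{-1/3}r^{-k}$ and the choice of $k$ reduces to a condition of the form $\tau^3\kappa\gtrsim C$. Shrinking $\kappa$ makes this \emph{harder}, not easier. The paper (see the line following \eqref{lwr bd med r 2}) resolves this by \emph{enlarging $\tau$}: one takes $\tau\ge (C/\kappa)^{1/2}$, with $\kappa$ already fixed. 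The actual reason $\kappa$ is taken small (and only in the case $0<\delta<1$) is unrelated to \eqref{e:keyLower}: the upper bound in \eqref{e:phiBounds} carries a factor $e^{k\|s^{-2}\Phi_1\|_{L^1}}\le (\log h^{-1})^{C\kappa}$, and one needs $C\kappa\le \e_1$ so that this fits under the $(\log h^{-1})^{1+\e}$ budget in \eqref{e:phiBoundMe}. You do not mention this mechanism.

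There is a related direction error in your description of $T$ and $t$. You say $T$ is ``chosen large enough to absorb a factor $(\log h^{-1})^{T(1-k)}$ into $(\log h^{-1})^{1+\e}$,'' but increasing $T$ enlarges that factor. In the paper, $T$ is pinned by a two-sided constraint: the far-zone lower bound \eqref{prelim key est big r} needs $T/3>2\e_1$, while the phase bound needs $T(1-k)$ small; the choice $T=9\e_1$ (and, for $\delta=0$, $T=3\tilde\rho/2$ with $t$ large to kill $t^{\tilde\rho}e^{-2t/3}$) threads this needle. Your sketch of the $(a,\infty)$ region does not make this tension visible, and as written the parameter choices would not close.
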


\begin{proof} 
We prove Proposition \ref{p:key} over the course of subsections \ref{small r section}, \ref{large r section}, and \ref{bd phase section}. Throughout the proof, $C$ denotes a positive constant whose value may change from line to line, but is always independent of $T, \, t, \, \tau, \, \kappa,$ and $h$. Initially, we take $h_\delta \in (0,1)$ small enough so that \eqref{implication} holds. At several steps of the proof, we further decrease $h_\delta$ if necessary.

\subsection{Proof of \eqref{e:keyLower}, small $r$ region} \label{small r section} \hfill \\
\textbf{Case $0 < r < 1$:}\\
When $0 < r< 1$,
\begin{equation*}
\mathcal{W} = \frac{r}{2}, \quad q = 0 \text{ (see \eqref{q})}, \quad \Phi = -\frac{\beta}{2r}, \quad \varphi' = \tau h^{-\sigma} r^{-\beta/2}.
\end{equation*}

  Using these, $\sigma = 1/3$, and that $|V_0| \le c_0r^{-\beta}$ and $V_L = V_S = 0$ in a neighborhood of $r \le 1$ (see \eqref{V0 bd} and \eqref{L plus S}), we revisit \eqref{e:keyCalc} and find
 \begin{equation*}
 \begin{gathered}
 1 + 2 \mathcal{W} \Phi = 1 - \frac{\beta}{2} ,\\
 2(1 + \gamma) \mathcal{W} |\Phi|^2 \min \big(\mathcal{W}, \frac{h}{4\varphi'} \big) \le  2(1 + \gamma) \mathcal{W}^2 |\Phi|^2  \le \frac{(1 + \gamma) \beta^2}{8} , \\
 2(1 + \gamma) h^{-2} \mathcal{W} |V_0 + V_S|^2 \min \big(\mathcal{W}, \frac{h}{4\varphi'} \big) \le (1 + \gamma)\frac{c_0^2}{4\tau} h^{-2\sigma} r^{1 - \frac{3\beta}{2}}, \\ 
 V_L + \mathcal{W}\big(V'_L + \frac{h^2q}{4r^2} \big) = 0.
 \end{gathered}
 \end{equation*}
 In the second estimate, we used that the minimum is less than $\mathcal{W}$, but in the third estimate, we used that it is less than $h/(4 \varphi')$.
 Therefore
\begin{equation*}
\begin{gathered}
   A-(1+\gamma)B \ge w' \big[ E + h^{-2\sigma} r^{-\beta} \big( \tau^2( 1 - \tfrac{\beta}{2} - (1 + \gamma)\tfrac{\beta^2}{8}) -(1 + \gamma) \tfrac{c^2_0}{4 \tau} r^{1 - \frac{\beta}{2}} \big) \big], \\ 
   E \in [E_{\min}, E_{\max}], \, h \in (0,h_\delta], \, 0 < r < 1.
    \end{gathered}
\end{equation*}	
Since $\beta \le 2$ (see \eqref{beta}), and since \eqref{gamma} implies $8 - 4 \beta- (1 + \gamma)\beta^2 > 0$, we may choose $\tau \ge 1$ large enough, independent of $h$, so that \eqref{e:keyLower} holds for $0 < r < 1$. 

\noindent \textbf{Case $1 < r < a$:}\\
When $1 < r < a$,
\begin{equation} \label{quantities med r}
 \mathcal{W} = \frac{r}{2}, \quad  q = 0, \quad \Phi = -\frac{k}{r + \Phi_1(r)}, \quad \varphi' = \tau h^{-\sigma} e^{ -\int_1^r \frac{k}{s + \Phi_1(s)} ds}. 
\end{equation}
We first derive some bounds on $\varphi'_0 = \exp( -\int_1^r \frac{k}{s + \Phi_1(s)} ds)$ (see \eqref{phi prime}), and for this we use Lemma \ref{l:phi}. By \eqref{int Phi bounds},
\begin{equation*}
\frac{1}{r^k}\leq \varphi_0'(r) \leq \frac{e^{k\| s^{-2}\Phi_1(s)\|_{L^1(1, \infty)}}}{r^k}, \qquad 1 \le r \le a.
\end{equation*}

Next, we bound the exponent $k\| s^{-2}\Phi_1(s)\|_{L^1(1, \infty)}$ depending on the value of $\delta$. If $\delta = 0$ or $1$, then both $k$ and $\Phi_1(s)$ are independent of $h$ (see \eqref{k} and \eqref{Phi1}, respectively), thus we simply have $k\| s^{-2}\Phi_1(s)\|_{L^1(1, \infty)} \le C$. On the other hand, when $0 < \delta < 1$, both $k$ and  $\Phi_1(s)$ depend on $h$. But in this case $1/3 \le k \le 1$ thanks to \eqref{implication}, and, by \eqref{Phi1}, \eqref{tildem} and $\lambda = \log(\log(h^{-1}))$,
\begin{equation*}
 \| s^{-2}\Phi_1(s)\|_{L^1(1, \infty)} \le C(1 + \kappa \int^\infty_1 s^{-1 -\lambda^{-1}}ds) \le C(1 + \kappa \log(\log(h^{-1}))).
\end{equation*}
Thus we conclude
\begin{equation}
\label{e:phiBounds}
\begin{split}
\frac{1}{r^k}\leq \varphi_0'(r)&\leq \frac{e^{k\| s^{-2}\Phi_1(s)\|_{L^1(1, \infty)}}}{r^k}\\
&\le \begin{cases}   \frac{C}{r^{k}}   & \delta = 0 \text{ or } 1 \\[6pt]
\frac{C (\log(h^{-1}))^{\kappa C}}{r^k}  &0 < \delta < 1
\end{cases} , \qquad h \in (0, h_\delta], \, 1 \le r \le a.
\end{split}
\end{equation}
The estimate \eqref{e:phiBounds} informs our choice of $\kappa$. If $\delta = 0$ or $1$, fix $\kappa = 1/8$. If $0 < \delta < 1$, fix $\kappa \in (0, 1/8]$ small enough so that the factor $(\log(h^{-1}))^{\kappa C}$ in \eqref{e:phiBounds} is bounded from above by $(\log(h^{-1}))^{\e_1}$, where 
\begin{equation} \label{ep}
\e_1 \defeq \frac{\e}{7}.
\end{equation}

 So with $\kappa$ now fixed, we have
\begin{equation}
\label{e:phiBounds2}
\frac{1}{r^k}\leq \varphi_0'(r) \le \begin{cases}   \frac{C}{r^{k}}   & \delta = 0 \text{ or } 1 \\[6pt]
\frac{C (\log(h^{-1}))^{\e_1}}{r^k}  &0 < \delta < 1
\end{cases} , \qquad h \in (0, h_\delta], \, 1 \le r \le a.
\end{equation}

As in the previous case, we estimate each of the terms on the right side of \eqref{e:keyCalc}, keeping in mind that now $1 < r < a$. By $\mathcal{W} = r/2$, \eqref{Phi}, \eqref{Phi1}, the lower bound in \eqref{e:phiBounds2}, and $\varphi' = \tau h^{-\sigma} \varphi'_0$,
\begin{equation*}
 \begin{gathered}
 (\varphi')^2(1 + 2 \mathcal{W} \Phi) = (\varphi')^2 \big(1 - \frac{kr}{r + \Phi_1} \big) = (\varphi')^2\frac{(1 - k)r + \Phi_1}{r + \Phi_1} \\
 \ge \frac{\tau^2 h^{-2\sigma}}{r^{2k}} \frac{\Phi_1}{r + \Phi_1} = \frac{\tau^2 \kappa h^{-2\sigma}}{r^{2k}} (\tilde{m}^2_S + \chi + (y + m_L) \mathbf{1}_{1 < r \le b}).
 \end{gathered}
\end{equation*}
Continuing on, we use $\mathcal{W} = r/2$, \eqref{Phi}, the upper bound in \eqref{e:phiBounds2}, $\min( \mathcal{W}, h/(4\varphi')) \le h/(4\varphi')$, $\varphi' = \tau h^{-\sigma} \varphi'_0$, and $k \le 1$ to find
 \begin{equation*}
 2(1 + \gamma)(\varphi')^2 \mathcal{W} |\Phi|^2 \min \big(\mathcal{W}, \frac{h}{4\varphi'} \big) \le (1 + \gamma)(\varphi')^2 \frac{k^2r}{(r + \Phi_1)^2} \min \big(\mathcal{W}, \frac{h}{4\varphi'} \big) \le  C \tau h^{1 - \sigma} \log(h^{-1})^{\e_1}.
 \end{equation*}
 To estimate the next term, we use $\mathcal{W} = r/2$, $|V_S + V_0|^2 \le C(m^2_S + \chi)r^{-2-2\delta}$ for $1 < r < a$, $\varphi' = \tau h^{-\sigma} \varphi'_0$, $\tau \ge 1$, and
 \begin{equation*}
  \min( \mathcal{W}, h/(4\varphi')) \le  \frac{h}{4\varphi'} \le \frac{h^{1+\sigma}} {\tau \varphi'_0} \le h^{1+\sigma} r^k, \qquad 1 < r < a, 
 \end{equation*}
 to see
 \begin{equation*}
 \begin{gathered}
 2(1 + \gamma) h^{-2} \mathcal{W} |V_S + V_0|^2 \min \big(\mathcal{W}, \frac{h}{4\varphi'} \big) \le \frac{Ch^{-1 + \sigma}(m^2_S + \chi)}{r^{1 + 2\delta - k}} \\
 = \begin{cases} 
 \frac{Ch^{-1 + \sigma}(m^2_S + \chi)}{r^{2k}} =  \frac{Ch^{-1 + \sigma}(\tilde{m}^2_S + \chi)}{r^{2k}} & \delta = 0 \text{ or } 1 \\[6pt]
 \frac{Ch^{-1 + \sigma}(m^2_S + \chi)}{r^{2k + \lambda^{-1}}} \le  \frac{Ch^{-1 + \sigma}(\tilde{m}^2_S + \chi)}{r^{2k}}  & 0< \delta < 1
 \end{cases}.
 \end{gathered}
 \end{equation*}
 To finish the estimate we used \eqref{delta}, \eqref{k} and \eqref{tildem}. In particular, when  $\delta = 0$ or $1$, $1 + 2\delta - k = 2k$ and $m_S = \tilde{m}_S$; when $0 < \delta < 1$, $1 + 2\delta - k = 2k + \lambda^{-1}$, $m_S = 1$, and $\tilde{m}_S = r^{-\lambda^{-1}/2}$.
 
 To estimate the final term we use $\mathcal{W} = r/2$, $q = 0$, and \eqref{b}, yielding
 \begin{equation*}
 V_L + \mathcal{W}\big(V'_L + \frac{h^2q}{4r^2} \big) = V_L + \frac{r}{2} V'_L   \le \big(c_L y  + \frac{c_L}{2} m_L \big)\mathbf{1}_{1 < r \le b} + \frac{E_{\min}}{4}.
 \end{equation*}
 Putting the above bounds into \eqref{e:keyCalc} and recalling $\sigma = 1/3$ yields
\begin{equation} \label{lwr bd med r 2}
\begin{gathered}
A-(1 + \gamma) B \ge w'\Big[\frac{3E}{4}+\frac{h^{-2\sigma}}{r^{2k}} (\tau^2 \kappa -C) (\tilde{m}^2_S + \chi + (y + m_L) \mathbf{1}_{1 < r \le b}) - C \tau h^{1 -\sigma} (\log(h^{-1}))^{\e_1} \Big], \\
E \in [E_{\min}, E_{\max}], \, h \in (0, h_\delta], \, 1 < r < a.
\end{gathered}
\end{equation}
Modify $\tau$ to be the maximum of $(C/\kappa)^{1/2}$ and its value assigned previously. Restricting $h$ further so that $C \tau h^{1 -\sigma} (\log(h^{-1}))^{\e_1} \le E_{\min}/4$, we arrive at \eqref{e:keyLower} when $1 < r < a$. 

\subsection{Proof of \eqref{e:keyLower}, large $r$ region} \label{large r section} \hfill \\
When $r > a$,
\begin{equation} \label{quantities large r}
\begin{gathered}
 \mathcal{W} = \frac{r}{2} \min \big( \mathcal{G}(r), \frac{E}{2c_Lm_L(r)}\big), \quad  q = \big( \frac{2}{r} - \frac{1}{\mathcal{W}} \big), \\
  \Phi(r) \defeq \begin{cases} -\frac{1 + \eta}{r}  & \delta > 0 \\
-\frac{1 + \tilde{\rho} (\log r)^{-1}}{r}  & \delta = 0
 \end{cases}, \quad \varphi' = h^{-\sigma} \tau \varphi'_0(a) \frac{a \mathcal{G}(a)}{r \mathcal{G}(r)}. 
 \end{gathered}
 \end{equation}
Combining the two identities in the first line of \eqref{quantities large r}, and substituting the expression for $\mathcal{G}$ (see \eqref{G}), shows
\begin{equation*}
\frac{r}{2}q = 1 - \max \big( \frac{1}{\mathcal{G}(r)}, \frac{2c_Lm_L(r)}{E}\big) = \begin{cases}
1 - \max \big( r^{-\eta}, \frac{2c_Lm_L(r)}{E}\big) & 0 < \delta < 1 \\
1 - \max \big( (\log r)^{-\tilde{\rho}}, \frac{2c_Lm_L(r)}{E}\big) & \delta = 0.
\end{cases}.
\end{equation*}
Recall from \eqref{e:lowerDerivative1} that we need to have $q \ge 0$. Since we have previously arranged $a \ge h^{-1/3}$ for all $0 \le \delta \le 1$ (see \eqref{lwr bd a}), and because $\lim_{r \to \infty} m_L(r) = 0$ (see \eqref{mL}), we may take $h_\delta$ smaller, if needed, so $h \in (0, h_\delta]$ and $r > a$ imply $q \ge 0$.

From the second line of \eqref{quantities large r},
 \begin{equation} \label{bounds Phi large r}
  -\frac{2}{r} \le \Phi(r) \le -\frac{1}{r}, \qquad r > a.
  \end{equation}
From $\varphi'_0(r) = \varphi'_0(a) a \mathcal{G}(a)/(r \mathcal{G}(r))$ for $r \ge a$ and \eqref{e:phiBounds2}, we also find
\begin{equation} \label{est phi0 prime r large}
\begin{split}
\frac{a^{1 - k }\mathcal{G}(a)}{r\mathcal{G}(r)} &\leq \varphi_0'(r) \\
&\le \begin{cases}
 \frac{Ca^{1-k}\mathcal{G}(a)}{r\mathcal{G}(r)} \le Ca^{-k} & \delta = 0 \text{ or } 1  \\
  \frac{C(\log(h^{-1}))^{\e_1}a^{1-k}\mathcal{G}(a)}{r\mathcal{G}(r)} \le C(\log(h^{-1}))^{\e_1} a^{-k} & 0 < \delta < 1
\end{cases} ,\quad h \in (0, h_\delta], \, r \ge a.
\end{split}
\end{equation}

We now make additional calculations involving $a = h^{-M}$ that are crucial below. This is where we make use of the parameters $T > 0$, $t \ge 1$ that were introduced in the definition of $M$, see \eqref{defn M}.  First, consider when $0 < \delta \le 1$. By our standing assumption \eqref{implication}, we have $1/3 \le k \le 1$ and $2k - \eta \ge 1/3$. Using the definition of $k$ (see \eqref{k}), it is straightforward to verify that $1 + 2\delta - k \ge 2k$ too. Thus, from $h^{\eta \lambda} = \eta$, $h^{\eta} = e^{-1}$, $M = (\sigma/k) + T\eta \lambda$ (see \eqref{defn M}), and $a = h^{-\frac{\sigma}{k}} (\log(h^{-1}))^T \ge h^{-\frac{1}{3}}$ (see \eqref{lwr bd a}),
  \begin{equation} \label{unpack a delta posit}
 \begin{gathered} 
 h^{-2\sigma} a^{-2k + \eta} = h^{-2\sigma}  (h^{\frac{\sigma}{k}} (\log(h^{-1}))^{-T})^{2k - \eta} \le e^{\frac{\sigma}{k}} \log(h^{-1})^{-\frac{T}{3}}  \le C \log(h^{-1})^{-\frac{T}{3}}, \\
 h^{-2\sigma} a^{-1 - 2\delta + k + \eta}  \le h^{-2\sigma} a^{-2k + \eta} \le C \log(h^{-1})^{-\frac{T}{3}}.
  \end{gathered}
 \end{equation}
We will fix the parameter $T$ for this case later.

  Second, consider when $\delta = 0$. Then $k = 1/3$ and we fix $T = \tilde{\rho}/2k = 3\tilde{\rho}/2$ at once. Furthermore, $M = 1 + (3\tilde{\rho} \eta \lambda /2) + t \eta$ (see \eqref{defn M}) and $a = e^{t} h^{-1} (\log(h^{-1})^{\frac{3\tilde{\rho}}{2}} \ge h^{-1}$ (see \eqref{lwr bd a}). Then for $h_\delta$ small enough and $h \in (0, h_\delta]$, $ 1 \le M \le Ct$. Using also $\sigma = 1/3$,
 \begin{equation} \label{unpack a delta 0}
 \begin{gathered}
h^{-2\sigma} a^{- 2k} (\log a)^{\tilde{\rho}} = h^{-\frac{2}{3}} a^{-\frac{2}{3}} (\log a)^{\tilde{\rho}} = h^{-\frac{2}{3}}  (e^{-t}h (\log(h^{-1}))^{-\frac{3\tilde{\rho}}{2}})^{\frac{2}{3}} (\log a)^{\tilde{\rho}} \\
 = M^{\tilde{\rho}} e^{-\frac{2t}{3}} \le  C t^{\tilde{\rho}} e^{-\frac{2t}{3}}, \\
 h^{-2\sigma} a^{-1 + k}  (\log a)^{-\tilde{\rho}}  = h^{-\frac{2}{3}} (e^{-t}h (\log(h^{-1}))^{-\frac{3\tilde{\rho}}{2}})^{\frac{2}{3}}  (\log a)^{-\tilde{\rho}}  \le e^{-\frac{2t}{3}}.
  \end{gathered}
 \end{equation}

Once more, our goal is to control the terms on the right side of \eqref{e:keyCalc}, but this time for $r > a$. First, by $\mathcal{W} \le r \mathcal{G}/2$ (see \eqref{quantities large r}), $|\Phi| \le 2/r$ (see \eqref{bounds Phi large r}, $\varphi' = \tau h^{-\sigma} \varphi'_0$, and the upper bound in \eqref{est phi0 prime r large},

\begin{equation*}
2 (\varphi')^2 \mathcal{W}|\Phi| \le 2 (\varphi')^2 \mathcal{G} \le \begin{cases} C\tau^2 h^{-2\sigma} a^{-2k + \eta} & \delta = 1 \\
C\tau^2 h^{-2\sigma} (\log(h^{-1}))^{2\e_1} a^{-2k + \eta}  & 0 < \delta < 1 \\ 
C\tau^2 h^{-2\sigma} a^{-2k} (\log a)^{\tilde{\rho}} & \delta = 0
 \end{cases}.
\end{equation*}
Note that in the last step we used $\mathcal{G} = r^{\eta}$ for $0 < \delta \le 1$ while $\mathcal{G} = (\log r)^{\tilde{\rho}}$ for $\delta = 0$. Using now the first line of \eqref{unpack a delta posit} and the first line of \eqref{unpack a delta 0},
\begin{equation*}
2 (\varphi')^2 \mathcal{W}|\Phi| \le \begin{cases} C \tau^2 \log(h^{-1})^{-\frac{T}{3}} & \delta = 1 \\
C \tau^2 \log(h^{-1})^{2\e_1-\frac{T}{3}}  & 0 < \delta < 1 \\ 
 C \tau^2t^{\tilde{\rho}} e^{-\frac{2t}{3}} & \delta = 0
 \end{cases}.
\end{equation*}

Next, by $\mathcal{W} \le r \mathcal{G}/2$, $|\Phi| \le 2/r$, $\min(\mathcal{W}, h/(4 \varphi')) \le h/(4 \varphi')$, $\varphi' = \tau h^{-\sigma}\varphi'_0$, the upper bound in \eqref{est phi0 prime r large},

 \begin{equation*}
 2(1 + \gamma) (\varphi')^2 \mathcal{W} |\Phi|^2 \min \big(\mathcal{W}, \frac{h}{4\varphi'} \big) \le \frac{C \tau h^{1 - \sigma} \mathcal \varphi'_0 \mathcal{G}}{r} \\
  \le \begin{cases} C\tau h^{1 - \sigma} a^{-1-k + \eta}  & \delta = 1\\[6pt] 
 C\tau (\log(h^{-1}))^{\e_1} h^{1 - \sigma} a^{-1-k + \eta} & 0 < \delta < 1 \\[6pt]
 C\tau h^{1 - \sigma} a^{-1-k} (\log a)^{\tilde{\rho}} & \delta = 0 
   \end{cases}.
\end{equation*}
 Since $a \ge h^{-1/3}$ for all $0 \le \delta \le 1$ (see \eqref{lwr bd a}), by further decreasing $h_\delta$ as needed we attain $(\log(h^{-1}))^{\e_1} a^{-1 -k  +\eta} \le 1$ for $0 < \delta \le 1$ and $h \in (0,h_\delta]$. On the other hand we attain $a^{-1-k} (\log a)^{\tilde{\rho}} \le 1$ for $\delta = 0$ and  $h \in (0,h_\delta]$. So, overall,
\begin{equation*}
2(1 + \gamma) (\varphi')^2 \mathcal{W} |\Phi|^2 \min \big(\mathcal{W}, \frac{h}{4\varphi'} \big)  \le C\tau h^{1 - \sigma}.
\end{equation*} 

Continuing on, we decrease $h_\delta$ if necessary, so $V_0(x) = 0$ if $|x| = r > a$. Complementing this with $\mathcal{W} \le r \mathcal{G}/2$, $|V_S| \le c_Sm_S(r)r^{-1 -\delta}$, $\min(\mathcal{W}, h/(4 \varphi')) \le h/(4 \varphi')$, $\varphi' = \tau h^{-\sigma}\varphi'_0$, $\tau \ge 1$, the lower bound in \eqref{est phi0 prime r large}, and $\sigma = 1/3$,

  \begin{equation*}
 2(1 + \gamma) h^{-2} \mathcal{W} |V_S|^2 \min \big(\mathcal{W}, \frac{h}{4\varphi'} \big) \le C h^{-1+\sigma} \frac{\mathcal{G} m_S^2}{r^{1 + 2\delta}\varphi'_0} \le C h^{-2\sigma} \frac{(m_S\mathcal{G})^2}{\mathcal{G}(a) r^{2\delta}} a^{-1 + k}
 \end{equation*}
 Recalling $m_S \le 1$ and $\mathcal{G} = r^\eta$ for $0 < \delta \le 1$, while $m_S = (\log(r) +1)^{-\rho}$ and $\mathcal{G} = (\log r)^{\tilde{\rho}}$ for $\delta =0$, we substitute,
 \begin{equation*}
 \frac{(m_S\mathcal{G})^2}{\mathcal{G}(a) r^{2\delta}}  \begin{cases} \le a^{-\eta} r^{2\eta - 2\delta} \le a^{-2\delta + \eta} &0 < \delta \le 1 \\
= (\log r)^{-\tilde{\rho}} ( (\log r)^{\tilde{\rho}} (\log(r) +1)^{-\rho})^2 r^{-2\delta} \le (\log a)^{-\tilde{\rho}} & \delta = 0 \\
 \end{cases}.
 \end{equation*}
 Here, we also used that, when $0 < \delta \le 1$, $\delta \ge \eta$ (see \eqref{implication}), and when $\delta = 0$, $\tilde{\rho} \le \rho$. Combining this with the previous estimate, the second line of \eqref{unpack a delta posit}, and the second line of \eqref{unpack a delta 0}, 
 \begin{equation*}
 \begin{gathered}
 2(1 + \gamma) h^{-2} \mathcal{W} |V_S|^2 \min \big(\mathcal{W}, \frac{h}{4\varphi'} \big) \\
 \le \begin{cases} C h^{ - 2\sigma} a^{-1 -2\delta + k + \eta} \le C \log(h^{-1})^{-\frac{T}{3}}  & 0 < \delta \le 1\\[6pt] 
 C h^{ - 2\sigma} a^{-1 +k} (\log a)^{-\tilde{\rho}} \le e^{-\frac{2}{3}t} & \delta = 0 
   \end{cases}.
   \end{gathered}
   \end{equation*}
The final term we need to estimate is,   
   \begin{equation*}
 V_L + \mathcal{W}\big(V'_L + \frac{h^2q}{4r^2} \big) \le \frac{3E}{8} + c_Ly\mathbf{1}_{1 < r < b} +\frac{h^2}{2r^3} ,
 \end{equation*}
where we used that, for $r > a$, $0 \le q \le 2/r$, $V_L \le (E_{\min}/8) + c_Ly \mathbf{1}_{1 < r \le b}$ (see \eqref{b}), and
\begin{equation*}
\mathcal{W}V'_L \le \frac{r}{2} \frac{E}{2c_L m_L} \frac{c_L m_L}{r} = \frac{E}{4}.
\end{equation*} 
From the above, and taking $h_\delta$ smaller so that $a > b$, we conclude, 
\begin{equation} \label{prelim key est big r}
\begin{gathered}
\begin{split}
A&- (1 + \gamma)B  \\
& \ge \begin{cases} 
 w' \Big[ \frac{5E}{8} - C\big( \tau^2 (\log(h^{-1}))^{2\e_1- \frac{T}{3}} + \tau h^{1 - \sigma} + h^2\big) \Big] & 0 < \delta \le 1 \\[6pt]
 w'\Big[ \frac{5E}{8} - C \big( \tau^2 t^{\tilde{\rho}}e^{-\frac{2}{3}t} + \tau h^{1 -\sigma}  + h^2\big) \Big] & \delta = 0
\end{cases}, 
\end{split} \\
E \in [E_{\min}, E_{\max}], \, h \in (0, h_\delta], \, r > a.
\end{gathered}
\end{equation}

 If $0 < \delta \le 1$, fix $T = 9\e_1$, and further decrease $h_\delta$ as needed, in particular so that $C (\tau^2 (\log(h^{-1}))^{-\e_1} + \tau h^{1 - \sigma} + h^2)\le E_{\min}/8$, to arrive at \eqref{e:keyLower} for $r > a$.
If $\delta =0$, pick $t$ large enough so that $Ct^{\tilde{\rho}}e^{-\frac{2}{3}t} \le E_{\min}/16$, and then further decrease $h_\delta$ so that $C( \tau h^{1- \sigma} + h^2) \le E_{\min}/16$, to attain \eqref{e:keyLower} for $r > a$. 

Thus, by subsections \ref{small r section} and \ref{large r section}, we have demonstrated \eqref{e:keyLower}.

\subsection{Bounding the phase} \label{bd phase section} \hfill \\

Our remaining goal is to show \eqref{e:phiBoundMe}. Recall \eqref{e:phiBounds2} and \eqref{est phi0 prime r large}:
\begin{equation*}
\begin{gathered}
0 \le \varphi_0'(r)\leq \begin{cases} \frac{C}{r^k}& \delta = 0 \text{ or } 1\\[6pt]
   \frac{C(\log(h^{-1}))^{\e_1}}{r^k} &0 < \delta < 1 \end{cases}, \qquad h \in (0, h_\delta], \, 1 < r \le a, \\
0 \le  \varphi_0'(r)\leq \begin{cases}   \frac{a^{1-k}\mathcal{G}(a)}{r\mathcal{G}(r)}& \delta = 0 \text{ or } 1 \\[6pt]
  \frac{C(\log(h^{-1}))^{\e_1}a^{1-k}\mathcal{G}(a)}{r\mathcal{G}(r)}&0 < \delta < 1\end{cases}, \qquad h \in (0, h_\delta], \, r > a.\\
\end{gathered}
\end{equation*}
We also have, from $a = h^{-M}$, $\mathcal{G} = r^\eta$ for $0 < \delta \le 1$, and $\mathcal{G} = (\log r)^{\tilde{\rho}}$ for $\delta =0$,
\begin{equation*}
 \int_{a}^\infty \frac{\mathcal{G}(a)}{s\mathcal{G}(s)}ds = \begin{cases} \eta^{-1} = \log(h^{-1}) & \delta > 0, \\ 
   (1 - \tilde{\rho})^{-1} \log a = M (1 - \tilde{\rho})^{-1} \log(h^{-1}) & \delta = 0.  \end{cases}
\end{equation*}
Using these, we estimate $\varphi_0(r)$,

\begin{equation}
\begin{split} \label{e:phiMax}
\varphi_0(r) &\le  \int_0^{1} \varphi'_0(s) ds +  \int_1^{a} \varphi'_0(s) ds + \int_a^{^\infty} \varphi'_0(s) ds \\
&\le \begin{cases}
C a^{1-k} \log(h^{-1}) & \delta = 1\\
C a^{1-k} \log(h^{-1})^{1 + \e_1} & 0 < \delta < 1\\
C M a^{1-k} \log(h^{-1}) & \delta = 0
\end{cases}, \qquad h \in (0, h_\delta].
\end{split}
\end{equation}

Recall that we found,
 \begin{equation*} 
 a = \begin{cases} 
  h^{-\frac{\sigma}{k} - T\eta \lambda} =  h^{-\frac{\sigma}{k}} (\log(h^{-1}))^{9\e_1} & 0 < \delta \le 1 \\
   h^{-1- T\eta \lambda - t\eta} = e^{t} h^{-1} (\log(h^{-1}))^T & \delta = 0
  \end{cases},
  \end{equation*}
where we used that we have fixed $T = 9\e_1$ when $0 < \delta \le 1$ and $T = 3 \tilde{\rho}/2$ when $\delta = 0$. Combining this with \eqref{k}:
\begin{equation*}
k = \begin{cases} 1 & \delta = 1
 \\ \frac{1 + 2\delta - \lambda^{-1}}{3} & 0 < \delta < 1 \\
\frac{1}{3} & \delta = 0 \end{cases},
\end{equation*}
and $1/3 \le k \le 1$, we see that 
\begin{equation*}
\begin{split}
&a^{1-k} \\
&= \begin{cases}
1 & \delta =1 \\
h^{-\frac{\sigma}{k}(1-k)} (\log(h^{-1}))^{9(1- k)\e_1}    \le  h^{-\frac{2(1 - \delta) + \lambda^{-1}}{3(1 + 2\delta - \lambda^{-1})}}(\log(h^{-1}))^{6\e_1 } & 0 < \delta < 1 \\
h^{-(1 + \frac{\tilde{\rho}}{2k} \eta \lambda + t \eta)(1 - k)} = e^{\frac{2}{3}t} h^{-\frac{2}{3}} (\log(h^{-1}))^{\tilde{\rho}}& \delta = 0
\end{cases}, \quad h \in (0, h_\delta], \, r >0.
\end{split}
\end{equation*}
In the case $\delta = 0$, we have already fixed $t \ge 1$ independent of $h$. Therefore, for some $C_\delta > 0$ independent of $h$,
\begin{equation*}
\varphi_0(r) \le \begin{cases} C_\delta \log(h^{-1}) & \delta = 1 \\  C_\delta  h^{-\frac{2(1 - \delta) + \lambda^{-1}}{3(1 + 2\delta - \lambda^{-1})}}(\log(h^{-1}))^{1 + 7\e_1 } & 0< \delta < 1 \\
C_\delta h^{-\frac{2}{3}}(\log(h^{-1}))^{1 + \tilde{\rho}} & \delta = 0
  \end{cases}, \qquad h \in (0, h_\delta],\, r> 0.
\end{equation*}
Noting $\e_1 = \e/7$ (see \eqref{ep}), we have arrived at \eqref{e:phiBoundMe}.\\
\end{proof}

\section{Carleman estimate} \label{Carleman estimate section}
Our goal in this section is to prove Lemma \ref{Carleman lemma}, which is a Carleman estimate from which Theorem \ref{Linfty thm} follows.
 
\begin{lemma} \label{Carleman lemma}
Suppose the assumptions of Proposition \ref{p:key} hold. There exist $C > 0$ and $h_\delta \in (0,1)$, both independent of $h$ and $\ep$, so that
\begin{equation} \label{Carleman est}
\|\langle x \rangle^{-\frac{1+ \eta}{2}} e^{\varphi/h} v \|^2_{L^2(\R^n)} \leq
  e^{C/h} \|\langle x \rangle^{\frac{1 + \eta}{2}}e^{\varphi/h}(P(h) - E \pm i\varepsilon)v \|^2_{L^2(\R^n)} 
+   e^{C/h}  \varepsilon \| e^{\varphi/h}v \|^2_{L^2(\R^n)}.
\end{equation}
for all $E \in [E_{\min}, E_{\max}]$, $h \in (0, h_\delta]$, $0 \le \varepsilon \le h$, and  $v \in C_{0}^\infty(\mathbb{R}^n)$.
\end{lemma}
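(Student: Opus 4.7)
The plan is to obtain \eqref{Carleman est} by splitting the integral in $r$ at a fixed threshold $r_0 > 0$, applying the energy method for $r \ge r_0$ and Obovu's Mellin-transform near-origin estimate \cite[Lemma 2.2]{ob23} for $r \le r_0$, and then unwinding the conjugation $u(r,\theta) := e^{\varphi(r)/h} r^{(n-1)/2} v(r,\theta)$. Under this substitution, $P^\pm_\varphi(h) u = e^{\varphi/h} r^{(n-1)/2}(P(h) - E \pm i\ep) v$ by \eqref{conjugation}, and polar coordinates yield $\|\langle r\rangle^{-(1+\eta)/2} e^{\varphi/h} v\|^2_{L^2(\R^n)} = \int_0^\infty \langle r\rangle^{-(1+\eta)} \|u\|^2\, dr$ together with analogous identities for the other norms, so \eqref{Carleman est} is equivalent to
\begin{equation*}
\int_0^\infty \langle r\rangle^{-(1+\eta)} \|u\|^2\, dr \le e^{C/h} \int_0^\infty \langle r\rangle^{1+\eta} \|P^\pm_\varphi u\|^2\, dr + e^{C/h}\ep \int_0^\infty \|u\|^2\, dr,
\end{equation*}
in $L^2(\US^{n-1})$ norms.

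For $r \ge r_0$, I would integrate the pointwise inequality \eqref{e:lowerDerivative2} from $r_0$ to $\infty$. The boundary term at $\infty$ vanishes since $v$ has compact support; the boundary term at $r_0$ is $-w(r_0) F[u](r_0)$, which I plan to absorb via the near-origin estimate below. Applying Proposition \ref{p:key} gives $A-(1+\gamma)B \ge (E_{\min}/2)w'$ a.e., so the integrated inequality controls $\int_{r_0}^\infty w'\bigl(\|u\|^2 + \|hu'\|^2\bigr)\,dr$ from below. The $\ep$ term is handled by Cauchy--Schwarz together with the hypothesis $\ep \le h$: one estimates $2\ep w|\imag\langle u,u'\rangle| \le \ep w h^{-1}(\|u\|^2 + \|hu'\|^2) \le w(\|u\|^2 + \|hu'\|^2)$, and the second summand is absorbed on the left after reserving a small fraction of $w'\|hu'\|^2$, noting that $w \le Ch^{-2-2M}$ with $M$ polylogarithmic in $\log(h^{-1})$ so that the loss is subsumed by $e^{C/h}$. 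The passage to the targeted weighted form uses \eqref{univ lwr bd w prime}, $w' \ge (\log h^{-1})^{-C}\langle r\rangle^{-1-\eta}$ on the left, and \eqref{w squared over w prime}, $w^2/(h^2 w') \le e^{C/h}\langle r\rangle^{1+\eta}$ on the right.

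For $r \le r_0$, I would invoke \cite[Lemma 2.2]{ob23}, which uses the Mellin transform to bound $\int_0^{r_0}\|u\|^2\,dr$ in terms of $\int_0^{r_0}\|P^\pm_\varphi u\|^2\,dr$ and boundary data at $r_0$. Hypothesis \eqref{beta} enters precisely here so that the relevant Mellin symbol remains elliptic despite the $|x|^{-\beta}$ singularity and the negative effective potential in dimension two, which is what prevents us from simply running the energy method down to $r=0$ (where $w' \sim 2r$ degenerates). Summing the two partial estimates, the boundary contributions at the fixed threshold $r_0$ pair off against one another and are absorbed: $w(r_0)$ and the various traces of $u, u'$ at $r_0$ are $h$-independent or can be dominated by the $\int_0^{r_0}\|P^\pm_\varphi u\|^2\,dr$ integral via a trace argument.

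Undoing the substitution $u = e^{\varphi/h} r^{(n-1)/2} v$ returns \eqref{Carleman est}. The step I expect to be most delicate is the reconciliation of the $r_0$ boundary contributions between the away-from-origin and near-origin sub-estimates -- the ``patching'' emphasized in the introduction -- together with the verification that the $\ep$ term absorbs cleanly despite $w/w' = \mathcal{W}$ being unbounded in $r$. The polylogarithmic boundedness of $M$ (and hence of all $h^{-M}$ prefactors, which sit comfortably inside $e^{C/h}$) is the key quantitative feature that makes this scheme close.
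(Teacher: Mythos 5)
Your high-level strategy is the right one — energy method far from the origin, Obovu's Mellin estimate near the origin, glue — but two of the steps do not close as you've written them, and the actual mechanics differ from your plan in a way worth internalizing.

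First, there is no boundary term at a finite threshold $r_0$, and introducing one creates work the paper never has to do. In the paper's away-from-origin estimate, one integrates $(wF)'$ over all of $(0,\infty)$: the boundary term at $\infty$ vanishes by compact support, and the boundary term at $0$ vanishes because $w(r)=r^2$, so $wF(0)=0$. The ``away-from-origin'' loss is therefore not a trace of $u$ or $u'$ at some $r_0$; it is the degeneracy of the weight $w'\sim 2r$ as $r\to 0$ on the left-hand side of \eqref{final est}. Consequently the gluing is done by decomposing the target norm on the left, not the integration interval, into three pieces $0<r<\alpha$, $\alpha<r<a$, $r>a$, with the crucial $h$-dependent threshold $\alpha=\alpha_\beta h^{2/(2-\beta)}$ rather than a fixed $r_0$: Obovu's bound controls the first piece (where $w'\sim 2r$ is useless), and \eqref{final est} controls the remaining two, with $\alpha^{-1}$ and $\alpha^{1+2t_0}$ safely dominated by $e^{C/h}$. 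Your ``trace-pairing at $r_0$'' scheme would need a separate argument and is not what the hypotheses are set up to deliver.

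Second, your absorption of the $\ep$ term is flawed. After Cauchy--Schwarz you have $\frac{\ep}{h}\int w\bigl(\|u\|^2+\|hu'\|^2\bigr)\,dr$. You propose absorbing $w\|hu'\|^2$ into a fraction of $w'\|hu'\|^2$ using $w\le Ch^{-2-2M}$, but this cannot work pointwise: $\mathcal{W}=w/w'$ is unbounded (it grows at least like $r$ for $r>a$), so no constant fraction of $w'\|hu'\|^2$ can dominate $w\|hu'\|^2$ at large $r$, and the $h^{-2-2M}\le e^{C/h}$ bound doesn't help because $w'$ is not bounded away from zero — indeed $w'\sim(\log h^{-1})^{-C}r^{-1-\eta}\to 0$. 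The paper's fix is the cutoff-plus-integration-by-parts argument of \eqref{split u prime int}--\eqref{handle deriv u term}: near the origin one uses $2w\le w'$ (valid precisely there, where $w=r^2$), and away from the origin one introduces $\psi$ and uses the Dirichlet form of $P^\pm_\varphi$ to convert $\frac{\ep}{h}\int w|h(\psi u)'|^2$ into terms that are either genuinely $\ep$-small, or proportional to $\int\langle r\rangle^{1+\eta}|P^\pm_\varphi u|^2$, or supported on $1/2<r<1$ and carrying an extra $h^2$. This is the delicate step your outline glosses over.
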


There are three steps to the proof of Lemma \ref{Carleman lemma}. First, by way of Proposition \ref{p:key}, we establish a Carleman estimate which is similar to \eqref{Carleman est} but has a loss at the origin, because the weight $w$ vanishes quadratically as $r \to 0$ (see \eqref{w}). We call this the away-from-origin estimate. Second, we use Obovu's result \cite[Lemma 2.2]{ob23}, which is based on Mellin transform techniques, to obtain an estimate for small $r$ which does not have loss as $r \to 0$. In fact, the pertinent weight in Obovu's estimate is unbounded as $r \to 0$. We call this the near-origin estimate. The third and final step is to glue together the near- and away-from-origin estimates.

\begin{proof}[Proof of Lemma \ref{Carleman lemma}]
We give the proof of Lemma \ref{Carleman lemma} over the course of subsections \ref{afo est section}, \ref{no est section}, and \ref{glue ests section}. The notation $\int_{r,\theta}$ denotes the integral over $(0,\infty) \times \US^{n-1}$ with respect to the measure $dr d\theta$.  Throughout, $C > 0$ and $h_\delta \in (0, 1)$ are constants, both independent of $h$ and $\ep$, whose values may change from line to line.  

\subsection{Away-from-orgin estimate} \label{afo est section} \hfill \\

We begin from \eqref{e:lowerDerivative2}. Applying \eqref{e:keyLower}, we bound the right side of \eqref{e:lowerDerivative2} from below. For some $h_\delta \in (0,1)$,

\begin{equation} \label{lower bound wF prime}
\begin{gathered}
    (wF)' \ge -\frac{2(1 + \gamma)w^2}{\gamma h^2w'} \| P^{\pm}_\varphi(h) u \|^2 \mp 2 \varepsilon w \imag \langle u, u' \rangle +  \frac{\gamma}{2(1 + \gamma)} w' \|hu'\|^2 + \frac{E_{\min}}{2}w' \| u\|^2, \\
    E \in [E_{\min}, E_{\max}], \quad h \in (0, h_\delta], \quad r \neq 1, a, \quad u = e^{\varphi/h} r^{(n-1)/2}v.
  \end{gathered}
\end{equation}

Next, we integrate both sides of \eqref{lower bound wF prime}. We integrate $\int^\infty_0dr$ and use $wF, \, (wF)' \in L^1((0,\infty);dr)$, and $wF(0) = wF(\infty) = 0$, hence $\int_{0}^\infty (wF)'dr = 0$. Using also \eqref{w squared over w prime} yields
\begin{equation} \label{penult est}
\begin{gathered}
    \int_{r,\theta} w'\left(|u|^2 +|hu'|^2 \right) \leq  e^{C/h} \int_{r,\theta} (1 + r)^{1 + \eta}|P^{\pm}_\varphi(h)u|^2  + \frac{\ep}{h} \int_{r, \theta} w(|u|^2 + |hu'|^2), \\
    E \in [E_{\min}, E_{\max}], \quad h \in (0, h_\delta].
 \end{gathered}
\end{equation}

The remaining task is to absorb the term involving $u'$ on the right side of \eqref{penult est} into the left side. To this end, let $\psi (r) \in C^\infty([0, \infty); [0,1])$ with $\psi = 0$ near $[0, 1/2]$, and $\psi = 1$ near $[1, \infty)$. We have
\begin{equation} \label{split u prime int}
\begin{split}
 \frac{\ep}{h} \int_{r, \theta} w |hu'|^2 &\le  \frac{\ep}{h} \int_{0 < r < 1, \theta} w |hu'|^2 + \frac{\ep}{h} \int_{r, \theta} w |h(\psi u)'|^2 \\
  &\le \frac{1}{2} \int_{0 < r < 1, \theta} w' |hu'|^2 + \frac{\ep}{h} \int_{r, \theta} w |h(\psi u)'|^2.
 \end{split}
\end{equation}
To get the second line, we used $\ep \le h$ and $2w \le  w'$ for $0 < r < 1$, see \eqref{w}. The first term in the second line of \eqref{split u prime int} is easily absorbed into the left side of \eqref{penult est}. As for the second term, integrating by parts,

\begin{equation*} 
    \begin{split}
        \real \int_{r,\theta}(P^{\pm}_\varphi(h) (\psi u))\overline{\psi u} &= \int_{r,\theta} |h(\psi u)'|^2  + \real \int_{r,\theta} 2 h\varphi' (\psi u)' \overline{\psi u} + \int_{r, \theta} (h^2 r^{-2} \Lambda (\psi u))\overline{\psi u} \\
        &+ \int_{r,\theta} h\varphi''|\psi u|^2 + \int_{r,\theta} \left(V + E - (\varphi')^2 \right)
        |\psi u|^2,\\
    \end{split}
\end{equation*}
and 
\begin{equation*} 
    \int_{r, \theta} h \varphi''|\psi u|^2 = - \real \int_{r,\theta} 2 h\varphi' (\psi u)' \overline{\psi u}. 
\end{equation*}
These two identities, together with the facts that  $\ep \le h$, $\Lambda \ge -1/4$, $r^{-2}$ is bounded on $\supp \psi$, $w' = 2r \ge 1$ on $\supp \psi'$, and $|V + E - (\varphi')^2| \le e^{C/h}$ on $\supp \psi$ for $E \in [E_{\min}, E_{\max}]$ and $h \in (0,h_\delta]$, imply
\begin{equation} \label{handle deriv u term}
\begin{split}
 \frac{\ep}{h}  \int_{r, \theta} w |h(\psi u)'|^2 &\le \ep e^{C/h} \int_{r,\theta} |u|^2 +  C \int_{r, \theta}(r +1)^{1 + \eta} |P^{\pm}_\varphi(h)u|^2\\
 &+ Ch^2 \int_{1/2 < r < 1, \theta} w'|hu'|^2, \qquad  E \in [E_{\min}, E_{\max}],\, h \in (0,h_\delta], \, 0 < \ep \le h.
    \end{split}
\end{equation}

For $h$ sufficiently small, the second line of \eqref{handle deriv u term} is readily absorbed into the right side of \eqref{penult est}. Therefore, \eqref{penult est}, \eqref{split u prime int}, and \eqref{handle deriv u term} imply 
\begin{equation} \label{final est}
\begin{gathered}
    \int_{r,\theta} w'(|u|^2 + |hu'|^2) \le e^{C/h} \int_{r,\theta} (r + 1)^{1 + \eta}|P^{\pm}_\varphi(h)u|^2  + \varepsilon  e^{C/h}  \int_{r, \theta} |u|^2,\\
     E \in [E_{\min}, E_{\max}], \quad h \in (0, h_\delta], \quad 0 < \ep \le h.
 \end{gathered}
\end{equation}

\subsection{Statement of the near-origin estimate} \label{no est section}
\begin{lemma}[{\cite[Lemma 2.2]{ob23}}]  
Fix $t_0 \in (-1/2, 0)$. There exist $C > 0$ and $\alpha_\beta, \, h_\delta \in (0,1)$, all independent of $\ep$ and $h$, so that for all  $E \in [E_{\min}, E_{\max}]$, $h \in (0, h_\delta]$, $0 \le \ep \le 1$, and $v \in C^\infty_0(\R^n)$,
\begin{equation} \label{obovu est}
\begin{split}
\int_{0 < r < 1/2, \theta} |r^{-\frac{1}{2} - t_0} r^{\frac{n-1}{2}} v |^2 &\le Ch^{-4} \big( \int_{0 < r < 1, \theta} |r^{\frac{3}{2} - t_0} r^{\frac{n-1}{2}} (P - E \pm i\ep) v |^2 \\
&+ \int_{\alpha < r < 1, \theta} |r^{\frac{3}{2} - t_0} (V - E \pm i\ep) r^{\frac{n-1}{2}}v |^2 \\
&+ h^2  \int_{1/2 < r < 1, \theta} |r^{\frac{3}{2} - t_0} r^{\frac{n-1}{2}} v  |^2 + h  \int_{1/2 < r < 1, \theta} |r^{\frac{3}{2} - t_0} h(r^{\frac{n-1}{2}} v)'   |^2 \big),
\end{split}
\end{equation}
where 
\begin{equation} \label{alpha}
\alpha \defeq \alpha_\beta h^{\frac{2}{2 - \beta}}.
\end{equation}
\end{lemma}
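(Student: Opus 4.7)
The plan is to prove the estimate by the Mellin transform method used by Obovu \cite{ob23}, which exploits the Mellin-invariance of the radial part of $-h^2\Delta$ and handles the singular potential in a thin shell near the origin by absorption.

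The first step is to decouple the angular variables. Setting $u = r^{(n-1)/2} v$ and expanding $u(r,\theta) = \sum_\ell u_\ell(r) Y_\ell(\theta)$ in spherical harmonics with $\Lambda Y_\ell = \nu_\ell Y_\ell$, $\nu_\ell \ge -1/4$, the identity $r^{(n-1)/2} P r^{-(n-1)/2} = -h^2 \partial_r^2 + h^2 r^{-2}\Lambda + V$ gives, mode by mode,
\begin{equation*}
L_\ell u_\ell = r^{(n-1)/2}(P - E \pm i\ep)v\big|_\ell - \big((V - E \pm i\ep) u\big)\big|_\ell, \qquad L_\ell \defeq -h^2 \partial_r^2 + h^2 \nu_\ell r^{-2},
\end{equation*}
where $(\cdot)|_\ell$ denotes projection onto $Y_\ell$. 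Because $V$ couples modes, $(Vu)|_\ell$ cannot be absorbed into the operator and must be retained as a forcing -- this is why $|Vu|$ appears on the right side of \eqref{obovu est}. A smooth cutoff $\chi_1 \in C^\infty([0,\infty);[0,1])$ with $\chi_1 \equiv 1$ on $[0,1/2]$ and $\supp \chi_1 \subseteq [0,1)$ localizes to $(0,1)$; the commutator $[L_\ell, \chi_1]$, supported in $(1/2, 1)$, produces (after integration by parts against $\chi_1 u_\ell$) the two boundary-type contributions $h^2 \int_{1/2<r<1}|r^{3/2-t_0}u|^2$ and $h\int_{1/2<r<1}|r^{3/2-t_0}hu'|^2$ of \eqref{obovu est}.

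Next I would apply the Mellin transform $\widehat f(s) \defeq \int_0^\infty r^{s-1} f(r)\,dr$ to $\chi_1 u_\ell$. From $\widehat{r\partial_r f}(s) = -s\widehat f(s)$, $\widehat{r^a f}(s) = \widehat f(s+a)$, and $-r^2\partial_r^2 = -(r\partial_r)^2 + r\partial_r$, one finds
\begin{equation*}
\widehat{L_\ell f}(s) = h^2 p_\ell(s-2)\,\widehat f(s-2), \qquad p_\ell(s) \defeq -s^2 - s + \nu_\ell,
\end{equation*}
so inverting gives $\widehat f(s) = h^{-2}\widehat{L_\ell f}(s+2)/p_\ell(s)$. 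By the Mellin--Plancherel identity $\int_0^\infty r^{2a-1}|f|^2 dr = (2\pi)^{-1}\int_\R |\widehat f(a+i\tau)|^2 d\tau$ applied with $a = -t_0$ on the left and $a = -t_0 + 2$ on the right, the desired bound reduces to a uniform lower bound on $|p_\ell(-t_0 + i\tau)|$ over $\tau \in \R$ and over $\ell$. A direct computation on the line $\real s = -t_0$ yields $\real p_\ell(s) = \tau^2 + t_0 - t_0^2 + \nu_\ell$ and $\imag p_\ell(s) = \tau(2t_0 - 1)$; since $t_0 \in (-1/2, 0)$ forces $|2t_0 - 1| \ge 1$, the imaginary part dominates for large $|\tau|$, while for small $|\tau|$ the real part (together with $\nu_\ell \ge -1/4$) gives a lower bound depending only on $t_0$. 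The resulting estimate $\|\chi_1 u_\ell\|_{L^2(r^{-1-2t_0}dr)} \le Ch^{-2} \|L_\ell(\chi_1 u_\ell)\|_{L^2(r^{3-2t_0}dr)}$ is the source of the $h^{-4}$ factor in \eqref{obovu est}.

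The main obstacle is absorbing the inner-region contribution of the singular potential. Substituting the above decomposition of $L_\ell u_\ell$ and summing over $\ell$ via spherical Parseval produces a full $\int_0^1 r^{3-2t_0}|Vu|^2\,drd\theta$ on the right side. Splitting this at $r = \alpha$ yields the $\int_{\alpha < r < 1}$ contribution of \eqref{obovu est}, and the remaining inner part is absorbed via the pointwise comparison
\begin{equation*}
Ch^{-4} r^{3-2t_0}|V_0 u|^2 \le Cc_0^2 h^{-4} r^{3-2t_0-2\beta}|u|^2 \le \tfrac{1}{2} r^{-1-2t_0}|u|^2, \qquad 0 < r < \alpha,
\end{equation*}
which requires $\alpha^{4-2\beta}\le C'h^4$ and thereby selects $\alpha = \alpha_\beta h^{2/(2-\beta)}$ with $\alpha_\beta$ depending only on $\beta$, $c_0$, $t_0$. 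The hard part is to confirm that the constants from the Mellin multiplier bound, the Cauchy--Schwarz splitting of $L_\ell u_\ell$ into $Pv$ and $Vu$, and the Parseval summation leave enough slack to absorb a fixed fraction of the left side uniformly in $h$ and $\ep$; threading these constants through is what determines the explicit value of $\alpha_\beta$ and completes the proof of \eqref{obovu est}.
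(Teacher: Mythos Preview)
The paper does not supply its own proof of this lemma; it is quoted verbatim from \cite[Lemma 2.2]{ob23}, and the only information the paper gives about its proof is the remark in the introduction that Obovu established it ``using the Mellin transform, building on an earlier study of radial potentials \cite{dgs23}.'' Your sketch therefore cannot be compared line by line with anything in the present paper, but it does follow exactly the strategy the paper attributes to \cite{ob23}: spherical-harmonic reduction, Mellin inversion of $L_\ell$, a uniform lower bound on the symbol $p_\ell$ along $\real s = -t_0$, and absorption of the $V_0$ contribution on $\{r<\alpha\}$ via the pointwise bound $|V_0|\le c_0 r^{-\beta}$, which forces $\alpha^{4-2\beta}\lesssim h^4$ and hence $\alpha=\alpha_\beta h^{2/(2-\beta)}$.

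The argument is sound. One point worth making explicit, since you glossed it, is why the symbol $p_\ell(-t_0+i\tau)$ never vanishes: writing $\nu_\ell=(\ell+\tfrac{n-2}{2})^2-\tfrac14$, the zeros of $p_\ell(s)=-s^2-s+\nu_\ell$ are real and equal to $\ell+\tfrac{n-3}{2}$ and $-\ell-\tfrac{n-1}{2}$, neither of which lies in the open interval $(0,\tfrac12)\ni -t_0$ for any $n\ge 2$ and $\ell\ge 0$. This, together with the growth $|p_\ell|\gtrsim \tau^2+\nu_\ell$ at infinity, gives the uniform ellipticity you need. With that verified, the constants thread through and the absorption step closes exactly as you describe.
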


\subsection{Combining the near- and away-from-origin estimates} \label{glue ests section} \hfill \\
For $v \in C^\infty_0(\R^n)$, set $\tilde{u} = r^{\frac{n-1}{2}} v$. We have
\begin{equation} \label{begin gluing strategy}
\begin{split}
\| \langle r \rangle^{-\frac{1 + \eta}{2}} v \|^2_{L^2} &=  \int_{0 < r < \alpha, \theta} |\langle r \rangle^{-\frac{1 + \eta}{2}} \tilde{u}|^2 + \int_{\alpha < r < a, \theta} |\langle r \rangle^{-\frac{1 + \eta}{2}} \tilde{u}|^2 + \int_{r  > a, \theta} |\langle r \rangle^{-\frac{1 + \eta}{2}} \tilde{u}|^2 \\
&\le  \alpha^{1 + 2t_0} \int_{0 < r < \alpha, \theta} |r^{-\frac{1}{2} - t_0} \tilde{u}|^2+ \alpha^{-1} \int_{\alpha < r < a, \theta} w'|\tilde{u}|^2 + \log(h^{-1})^C \int_{r > a, \theta} w' |\tilde{u}|^2,
\end{split}
\end{equation}
where we used \eqref{univ lwr bd w prime} and
\begin{equation} \label{w prime 2r}
 w' = 2r, \qquad 0 < r < a.
\end{equation}
Furthermore,
\begin{gather}
 \int_{0 < r < 1, \theta} |r^{\frac{3}{2} - t_0} r^{\frac{n-1}{2}} (P - E \pm i\ep) v |^2 \le \int_{r , \theta} |\langle r \rangle^{\frac{1 + \eta}{2}} r^{\frac{n-1}{2}} (P - E \pm i\ep) v |^2, \label{handle P straightforwardly} \\
 \int_{\alpha < r < 1, \theta} |r^{\frac{3}{2} - t_0} (V - E \pm i\ep) \tilde{u} |^2 \le \alpha^{2 - 2t_0 - 2 \beta} \int_{r , \theta} w' |\tilde{u} |^2, \label{convert V to powers of alpha} \\
  \int_{1/2< r < 1, \theta} |r^{\frac{3}{2} - t_0} \tilde{u} |^2 +   \int_{1/2 < r < 1, \theta} |r^{\frac{3}{2} - t_0} h\tilde{u}' |^2 \le e^{C/h} \int_{r, \theta} w'( |u |^2 + |h u '|^2), \label{est commutator terms}
 \end{gather}
where, as in subsection \ref{afo est section}, $u = e^{\varphi/h} r^{(n-1)/2} v = e^{\varphi/h} \tilde{u} $. To get \eqref{convert V to powers of alpha}, we used \eqref{V0} and \eqref{w prime 2r}. To get \eqref{est commutator terms}, we used \eqref{w prime 2r}, \eqref{phi prime}, and \eqref{tau}, hence
\begin{equation*}
| r^{\frac{1}{2} - t_0} h\tilde{u}'|^2 = |r^{\frac{1}{2} - t_0}(e^{-\frac{\varphi}{h}} hu' -  \varphi'e^{-\frac{\varphi}{h}} u)|^2 \le C w'( |u |^2 + \max \varphi' |h u '|^2) \le e^{C/h} w'( |u |^2 + |h u '|^2).
\end{equation*}

Consider now the second line of \eqref{begin gluing strategy}.  We bound the first term appearing there using \eqref{obovu est} and \eqref{handle P straightforwardly} through \eqref{est commutator terms} ($\alpha \le 1/2$ for $h$ small enough, see \eqref{alpha}). We bound the second and third terms using \eqref{final est}. Since negative powers of $\alpha$ are bounded from above by $e^{C/h}$ for $h$ small, we conclude, for $E \in [E_{\min}, E_{\max}], \, h \in (0, h_\delta],$ and $0 < \ep \le h$,
\begin{equation} \label{penult glue step}
\begin{split}
\| \langle r \rangle^{-\frac{1 + \eta}{2}} v \|^2_{L^2} &\le  e^{C/h} \big( \int_{r , \theta} |\langle r \rangle^{\frac{1 + \eta}{2}} r^{\frac{n-1}{2}} (P - E \pm i\ep) v |^2 +  \int_{r, \theta} w'( |u|^2 + |hu'|^2)  \\  
&+ e^{C/h} \int_{r,\theta} (r + 1)^{1 + \eta}|P^{\pm}_\varphi(h)u|^2  + \varepsilon  e^{C/h}  \int_{r, \theta} |u|^2 \\
&\le e^{C/h} \|\langle r \rangle^{\frac{1 + \eta}{2}}e^{\varphi/h}(P(h) - E \pm i\varepsilon)v \|^2_{L^2} 
+   e^{C/h}  \varepsilon \| e^{\varphi/h}v \|^2_{L^2} \\
&+ e^{C/h}  \int_{r, \theta} w'( |u|^2 + |hu'|^2),
\end{split} 
\end{equation}
where we have used 
\begin{equation*} 
2^{-\frac{1 + \eta}{2}} \le \left( \frac{\langle r \rangle}{r+1} \right)^{1 + \eta}.
\end{equation*}
Employing \eqref{final est} once more, to bound the last line of \eqref{penult glue step}, we arrive at \eqref{Carleman est} as desired.\\
\end{proof}

\section{Resolvent estimates} \label{resolv est section}
In this section, we deduce Theorem \ref{Linfty thm} from the Carleman estimate \eqref{Carleman est}. This same argument has been presented before, see, e.g., \cite{da14, gash22a, gash22b, ob23}.  But we include it here for the sake of completeness. The constants $C> 0$ and $h_\delta \in (0,1)$ may change between lines but stay independent of $E$, $\ep$, and $h$.

\begin{proof}[Proof of Theorem~\ref{Linfty thm}]

By the spectral theorem for self-adjoint operators, the bounds \eqref{Linfty resolv est delta 1}, \eqref{Linfty resolv est delta between 0 1}, and \eqref{Linfty resolv est delta 0} clearly hold for $\ep > h$. Therefore, to prove Theorem \ref{Linfty thm}, it suffices to consider $0 < \ep \le h$. 

Since increasing $s$ in \eqref{defn g} decreases the operator norm, to establish a certain estimate for \eqref{defn g} for fixed $s > 1/2$ independent $h$, it suffices to show the same estimate for $h$ small enough and an $h$-dependent $s$ of the form $(1+ \eta)/2 < 1$. For the rest of the proof, we assume $s$ has this form.

By Lemma \ref{Carleman lemma},
\begin{equation} \label{mult through by exp}
e^{-C_\varphi/h}\|\langle x \rangle^{-s} v \|^2_{L^2} \leq
  e^{C/h} \|\langle x \rangle^{s} (P(h) - E \pm i\varepsilon)v \|^2_{L^2} 
+  \varepsilon e^{C/h} \| v \|^2_{L^2},
\end{equation}
for all $E \in [E_{\min}, E_{\max}]$, $h \in (0, h_\delta]$, $0 \le \ep \le h$, and $v \in C^\infty_0(\R^n)$, and where $C_\varphi = C_\varphi(h) \defeq 2 \max \varphi$.  Moreover, for any $\gamma > 0$,
\begin{equation} \label{epsilon v}
\begin{split}
2\varepsilon \| v \|^2_{L^2} &= -2 \imag\langle (P(h) - E \pm i\varepsilon)v, v \rangle_{L^2} 
\\& \le  \gamma^{-1}\|\langle x \rangle^{s}(P(h) - E \pm i \varepsilon)v \|^2_{L^2} 
+ \gamma\|\langle x \rangle^{-s} v\|^2_{L^2}.  
\end{split}
\end{equation}
 Setting $\gamma =  e^{-(C+C_\varphi)/ h} $, and using \eqref{epsilon v} to estimate $\varepsilon \| v \|^2_{L^2}$ from above in \eqref{mult through by exp}, we absorb the $\| \langle x\rangle^{-s} v\|_{L^2}$ term that now appears on the right of \eqref{mult through by exp} into the left side. Multiplying through by $2e^{C_\varphi/h}$, and applying \eqref{e:phiBoundMe}, we find, for $E \in [E_{\min}, E_{\max}]$, $h \in (0, h_\delta]$,  $0 \le \ep \le h$, and $v \in C^\infty_0(\R^n)$,
\begin{equation} \label{penult}
\begin{split}
 \|&\langle x \rangle^{-s} v \|_{L^2}^2
 \\& \le \begin{cases}
\exp\big( C h^{-4/3} (\log(h^{-1}) \big) \|\langle x \rangle^s(P(h) - E \pm i \varepsilon)v \|_{L^2}^2  & \delta = 1, \\
   \exp\big( C h^{- \frac{4}{3} - \frac{2(1 - \delta) + \lambda^{-1}}{3(1 + 2\delta - \lambda^{-1})}} (\log(h^{-1}))^{1 + \epsilon} \big)  \|\langle x \rangle^s(P(h) - E \pm i \varepsilon)v \|_{L^2}^2 & 0 < \delta < 1, \\
 \exp\big(Ch^{-2}(\log(h^{-1}))^{1 + \tilde{\rho}} \big)  \|\langle x \rangle^s(P(h) - E \pm i \varepsilon)v \|_{L^2}^2 & \delta = 0.
   \end{cases}
   \end{split}
   \end{equation}
The final task is to use \eqref{penult} to show, for $E \in [E_{\min}, E_{\max}]$, $h \in (0,h_\delta]$, $0 < \ep \le h$, and $f \in L^2(\R^n)$,
\begin{equation} \label{ult}
\begin{split}
\|\langle x \rangle^{-s}(P(h)-E \pm & i\varepsilon)^{-1} \langle x \rangle^{-s} f \|^2_{L^2}\\
&\le \begin{cases} 
\exp\big( C h^{-4/3} (\log(h^{-1}) \big) h) \|f\|^2_{L^2}  & \delta = 1, \\
   \exp \big( C h^{- \frac{4}{3} - \frac{2(1 - \delta) + \lambda^{-1}}{3(1 + 2\delta - \lambda^{-1})}} (\log(h^{-1}))^{1 + \epsilon} \big) \|f\|^2_{L^2} & 0 < \delta < 1, \\
 \exp\big(Ch^{-2}(\log(h^{-1}))^{1 + \tilde{\rho}} \big)  \|f\|^2_{L^2} & \delta = 0.
   \end{cases}
\end{split}
\end{equation}
    from which Theorem~\ref{Linfty thm} follows. To establish \eqref{ult}, we prove a simple Sobolev space estimate  and then apply a density argument that relies on \eqref{penult}. 

The operator
\begin{equation*}
[P(h), \langle x \rangle^s]\langle x \rangle^{-s} = \left(-h^2 \Delta \langle x \rangle^s - 2h^2 (\nabla \langle x \rangle^s) \cdot \nabla \right) \langle x \rangle^{-s}
\end{equation*}
is bounded $H^2(\R^n) \to L^2(\R^n)$. So, for $v \in H^2(\R^n)$ such that $\langle x \rangle^s v \in H^2(\R^n)$,
 \begin{equation}\label{Ceph}
 \begin{split}
\|\langle x \rangle^{s}(P(h)-E \pm i \varepsilon)v\|_{L^2}  &\le \|(P(h)-E \pm i \varepsilon)\langle x \rangle^{s} v \|_{L^2} +  \|[P(h),\langle x \rangle^{s}]\langle x \rangle^{-s}\langle x \rangle^{s}v \|_{L^2}
\\& \le C_{E_{\max}, \varepsilon, h} \| \langle x \rangle^{s}v \|_{H^2},
\end{split} 
\end{equation}
for some constant $C_{E_{\max}, \varepsilon, h}>0$ depending on $E_{\max}$, $\varepsilon$ and $h$.

Given $f \in L^2(\R^n)$, the function $ u= \langle x \rangle^{s}(P(h)-E\pm i\varepsilon)^{-1}\langle x \rangle^{-s} f \in H^2(\R^n)$ because 
\begin{equation*}
u = (P(h) - E \pm i \varepsilon)^{-1} (f - w), \qquad w = \langle x \rangle^{s} [P(h), \langle x \rangle^{-s}]   \langle x \rangle^{s}  \langle x \rangle^{-s}u,
\end{equation*}
with  $\langle x \rangle^{s} [P(h), \langle x \rangle^{-s}]   \langle x \rangle^{s}$ being bounded $L^2(\R^n) \to L^2(\R^n)$ since $s < 1$.

Now, choose a sequence $v_k \in C_{0}^\infty$ such that $ v_k \to  \langle x \rangle^{s}(P(h)-E \pm i\varepsilon)^{-1}\langle x \rangle^{-s} f$ in $H^2(\R^n)$. Define $\tilde{v}_k \defeq \langle x \rangle^{-s}v_k$. Then, as $k \to \infty$,
\begin{equation*}
\begin{split}
\| \langle x \rangle^{-s} \tilde{v}_k - \langle x \rangle^{-s} (&P(h)-E\pm i \varepsilon)^{-1}\langle x \rangle^{-s}f \|_{L^2}  \\
&\le \| v_k - \langle x \rangle^{s} (P(h)-E\pm i \varepsilon)^{-1}\langle x \rangle^{-s}f \|_{H^2} \to 0.
\end{split}
\end{equation*}
Also, applying \eqref{Ceph},
\begin{equation*}
\|\langle x \rangle^{s}(P(h)-E\pm i \varepsilon)\tilde v_k - f\|_{L^2} \le C_{E_{\max}, \varepsilon,h} \|v_k - \langle x \rangle^{s} (P(h)-E \pm i \varepsilon)^{-1} \langle x \rangle^{-s} f \|_{H^2} \to 0.
\end{equation*} 
We then achieve \eqref{ult} by replacing $v$ by $\tilde{v}_k$ in \eqref{penult} and sending $k \to \infty$.\\
\end{proof}

\end{document}